\newtheorem{theorem}{Theorem}[section]
\newtheorem{lemma}[theorem]{Lemma}
\newtheorem{corollary}[theorem]{Corollary}
\theoremstyle{definition}
\theoremstyle{remark}
\numberwithin{equation}{section}
\def\phi{\varphi}
\def\A{\mathbb A}
\def\N{\mathbb N}
\def\mm{{\bf m}}
\newcommand{\rar}{\rightarrow}
\title[Minimal Free Resolution for monomial curves in $\mathbb{A}^{4}$]
{Minimal graded free resolutions for monomial curves in $\mathbb{A}^{4}$ defined by almost arithmetic sequences}
\author[Roy]{Achintya Kumar Roy}
\address{Department of Mathematics, Shyamsundar College, Burdwan 713424, India.} \email{achintya.roy@gmail.com}
\author[Sengupta]{Indranath Sengupta}
\address{Discipline of Mathematics, IIT Gandhinagar, VGEC Campus, Visat-Gandhinagar Highway, Chandkheda, Ahmedabad, Gujarat 382424, INDIA.} 
\email{indranathsg@iitgn.ac.in}
\thanks{The second author is the corresponding author, who is supported by the the IITGN internal project IP/IITGN/MATH/IS/201415-13.}
\author[Tripathi]{Gaurab Tripathi}
\address{Department of Mathematics, Jadavpur University, Kolkata,
WB 700 032, India.} \email{gelatinx@gmail.com}
\thanks{The third author thanks CSIR for the Senior Research Fellowship.}
\date{}
\subjclass[2000]{Primary 13D02; Secondary 13A02, 13C40.}
\keywords{Monomial curves, arithmetic sequences, Betti numbers, minimal free resolution.}
\begin{document}

\begin{abstract}
Let $\mm=(m_0,m_1,m_2,n)$ be an almost arithmetic sequence, i.e., a
sequence of positive integers with ${\rm gcd}(m_0,m_1,m_2,n) = 1$, such that 
$m_0<m_1<m_2$ form an arithmetic progression, $n$ is arbitrary and they 
minimally generate the numerical semigroup $\Gamma = m_0\N + m_1\N + m_2\N + n\N$. Let 
$k$ be a field. The homogeneous coordinate ring $k[\Gamma]$ of the affine monomial
curve parametrically defined by $X_0=t^{m_0},X_{1}=t^{m_1},X_2=t^{m_2},Y=t^{n}$ is
a graded $R$-module, where $R$ is the polynomial ring $k[X_0,X_1,X_2, Y]$ with the 
grading $\deg{X_i}:=m_i, \deg{Y}:=n$. In this paper, we construct a minimal graded 
free resolution for $k[\Gamma]$. 
\end{abstract}

\maketitle

\section{Introduction}
Monomial curves in affine and projective spaces are interesting because of the 
correspondence between geometry of the curves and arithmetic of the 
numerical semigroups defining them. Several authors have studied these curves, 
and semigroup rings in general from the viewpoints of algebra and geometry; \cite{herzog}, \cite{cowsik}, 
\cite{GNW}, \cite{bresinski}, \cite{patil1}, \cite{patil2}, \cite{patseng}, \cite{malooseng}.
\medskip

Construction of an explicit minimal free resolution of a finitely generated $k$-algebra 
is a difficult problem in general. This problem has been studied extensively for the 
homogeneous coordinate ring of an affine monomial curve, with good conditions on the 
defining semigroups; \cite{seng2}, \cite{hip1}, \cite{hip2}. It was conjectured in \cite{hip1} 
and later proved in \cite{hip2} that an affine monomial curve defined by an arithmetic sequence 
of positive integers $m_{0} < \cdots < m_{e}$ has the interesting property that the total 
Betti numbers of its homogeneous coordinate ring depend only on the integers $e$ and $a$, 
where $m_{0}$ is congruent to $a$ modulo $e$.  As a consequence, a proof was obtained for 
the periodicity conjecture of Herzog and Srinivasan on the eventual periodicity under translation 
of the Betti numbers for the monomial curves defined by an arithmetic sequence. Formulas involving 
Castelnuovo-Mumford regularity and the Frobenius number were derived as corollaries in \cite{hip2}. 
The periodicity conjecture for monomial curves has been settled recently by Thanh Vu in \cite{vu}.
\medskip

Let $\mm=(m_0,m_1,m_2,n)$ be an almost arithmetic sequence, i.e., a
sequence of positive integers with ${\rm gcd}(m_0,m_1,m_2,n) = 1$, such that 
$m_0<m_1<m_2$ form an arithmetic progression, $n$ is arbitrary and they 
minimally generate the numerical semigroup $\Gamma = m_0\N + m_1\N + m_2\N + n\N$. 
Let $k$ denote an arbitrary field and $R$ denote the polynomial ring $k[X_0,X_1,X_2, Y]$. 
Consider the $k$-algebra homomorphism $\phi:R\rar k[t]$ given by 
$\phi(X_0)=t^{m_0}$, $\phi(X_1)=t^{m_1}$, $\phi(X_2)=t^{m_2}$, $\phi(Y)=t^{n}$. 
Then, the ideal $\mathfrak{p}:=\ker{\phi}\subset R$ is the defining ideal of height $3$ and 
$k[\Gamma]:=k[t^{m_0},t^{m_1},t^{m_2},t^{n}]\simeq R/\mathfrak{p}$ is the coordinate 
ring of dimension $1$ of the monomial curve in $\A_k^{4}$, given by the parametrization 
$X_0=t^{m_0}$, $X_1=t^{m_1}$, $X_2=t^{m_2}$, $Y=t^{n}$. It is well known that $\mathfrak{p}$ is minimally 
generated by binomials. Moreover, $\mathfrak{p}$ is a homogeneous ideal and $k[\Gamma]$ is the homogeneous coordinate 
ring with respect to the gradation $\deg{X_i}:=m_i, \deg{Y}:=n$. Henceforth, the words homogeneous and graded will be used 
for this weighted gradation. In this paper, we construct an explicit minimal graded free 
resolution for $k[\Gamma]$. We will make use of the explicit description 
of the Gr\"{o}bner basis of the defining ideal $\mathfrak{p}$ given in \cite{seng1}. We will retain all the notations 
that were introduced in \cite{patsingh} and later used in \cite{patil2}, \cite{patseng}, \cite{seng1} and \cite{seng2} 
for the sake of convenience. 
\medskip

This work is in the same vein as that followed in \cite{seng2}, \cite{hip1}, \cite{hip2}. It generalizes the results 
proved in \cite{seng2}. While working out explicit minimal free resolutions, the foremost 
important theme in this paper is to exhibit a similar pattern that we proved in \cite{hip2} and that is, among an 
infinite family of monomial curves $\mathbb{A}^{4}$  defined by almost arithmetic sequences there are only finitely 
many, in fact $8$ distinct values of Betti numbers for these curves. The main theorem we prove is the following:

\begin{theorem}
Let $k$ denote an arbitrary field, $R$ the polynomial ring $k[X_{0},X_{1},X_{2}, Y]$ and 
$\mm=(m_0,m_1,m_2,n)$ an almost arithmetic sequence. Let $\mathfrak{p}$ be the 
homogeneous defining ideal and $k[\Gamma]:= R/\mathfrak{p}$ the homogeneous coordinate ring 
of the monomial curve in $\A_k^{4}$, defined by the parametrization $X_0=t^{m_0}$, $X_1=t^{m_1}$, 
$X_2=t^{m_2}$, $Y=t^{n}$. A minimal free resolution for the homogeneous coordinate ring $k[\Gamma]$ 
is given by 
$$0\longrightarrow R^{\beta_{2}}\longrightarrow R^{\beta_{1}}\longrightarrow R^{\beta_{0}}\longrightarrow R\longrightarrow k[\Gamma]\longrightarrow 0$$
where $\beta_{0}$, $\beta_{1}$ and $\beta_{2}$ are the total Betti numbers given below in tabular form as the triplet 
$[\beta_{0}, \beta_{1}, \beta_{2}]$. 
\medskip

\begin{tabular}{|c|c|c|}
\hline
\textbf{Cases} & $r=r'=1$ & $r=1,r'=2$\\
\hline
$W\neq\emptyset$ 
& 
\begin{tabular}{l|l}
$[4,5,2]$ & if\, $\mu=0$\\
$[6,8,3]$ & if\, $\mu\neq 0$,$\lambda=1$\\
$[6,8,3]$ & if\, $\mu\neq 0$,$\lambda\neq 1$,$q'=0$\\
$[6,9,4]$ & if\, $\mu\neq 0$,$\lambda\neq 1$,$q'\neq 0$\\
{} & {}\\
{} & {}\\
\end{tabular}
&
\begin{tabular}{l|l}
$[4,6,3]$ & if\, $\mu=0,q-q'\neq 1$\\
$[4,5,2]$ & if\, $\mu=0,q-q'=1$\\
$[5,6,2]$ & if\, $\mu\neq 0$, $\lambda\neq1$, $q-q'=1$\\
$[5,6,2]$ & if\, $\mu\neq 0$, $\lambda=1$, $q-q'\neq1$\\
$[5,5,1]$ & if\, $\mu\neq 0$, $\lambda=1$, $q-q'=1$\\
$[5,7,3]$ & if\, $\mu\neq 0$, $\lambda\neq 1$, $q-q'\neq 1$ 
\end{tabular}\\
\hline
$W=\emptyset$& 
\hspace*{-1.4in}$[4,5,2]$ & \hspace*{-1.75in}$[4,5,2]$ \\
\hline
\end{tabular}
\medskip

\begin{tabular}{|c|c|c|}
\hline
\textbf{Cases} & $r=2,r'=1$ & $r=r'=2$\\
\hline
$W\neq \emptyset$ & 
\begin{tabular}{l|l}
$[5,6,2]$ & if\,  $\mu=0,q'\neq 0$\\ 
$[5,5,1]$ & if\, $\mu=0,q'=0$\\
$[5,6,2]$ & if\, $\mu\neq 0, q'= 0$\\
$[4,6,3]$ & if\, $\mu\neq0$, $q'=q$\\
$[5,7,3]$ & if\, $\mu\neq 0$, $q'\neq q$, $q'\neq 0$
\end{tabular}
&
\begin{tabular}{l|l}
$[4,5,2]$ & if\, $\mu\neq 0$\\
$[3,3,1]$ & if\, $\mu=0$\\
{} & {}\\
{} & {}\\
{} & {}\\
\end{tabular}\\
\hline
$W=\emptyset$ & 
\hspace*{-1.55in}$[3,3,1]$ & \hspace*{-0.6in}$[3,3,1]$\\
\hline
\end{tabular}
\medskip

The set $W = [u-z, u-1] \times [v-w, v-1]$ and the integers 
$u, v, w, z, r, r', q, q', \mu, \lambda, \nu$ are as defined in 
{\rm [Lemma(2.1), \mbox{\cite{seng1}}]} and {\rm [Lemma(2.2), \mbox{\cite{seng1}}]}.
\end{theorem}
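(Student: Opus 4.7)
The plan is to construct the resolution explicitly, case by case, following the stratification given in the tables. The starting ingredient is the explicit Gr\"obner basis of $\mathfrak{p}$ obtained in \cite{seng1}, which together with the parameters $u,v,w,z,q,q',\mu,\lambda,\nu$ supplies a bank of binomial elements whose membership in a minimal generating set of $\mathfrak{p}$ depends on the combinatorial conditions $W=\emptyset$ versus $W\neq\emptyset$, on $(r,r')\in\{1,2\}^{2}$, and on whether $\mu$, $\lambda-1$, $q-q'$, $q'$ vanish. In each row of the tables I would first select a minimal binomial generating set of $\mathfrak{p}$, matching the first entry $\beta_{0}$ of the triplet. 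Since the differences between neighbouring cases are produced by a single binomial becoming redundant (e.g.\ when $\mu=0$ certain generators collapse), this step is essentially bookkeeping driven by the Gr\"obner basis description.

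To obtain $\beta_{1}$, I would compute the first syzygies of the chosen generating set by tracking the $S$-polynomial reductions that are guaranteed to terminate by the Gr\"obner property; each reduction to zero records an explicit syzygy, and a minimal subset of these yields $\beta_{1}$. The branching in the tables (for instance between $[6,8,3]$ and $[6,9,4]$ under $\mu\neq 0$, $\lambda\neq 1$ depending on whether $q'=0$) reflects precisely the situations in which two a~priori independent $S$-polynomial relations collapse into one. The second syzygies are then obtained by tracking relations among the first syzygies; since $\mathrm{ht}(\mathfrak{p})=3$ and the Auslander--Buchsbaum formula gives $\mathrm{pd}_{R}(k[\Gamma])\in\{3,4\}$, showing that the second syzygy module is free of the predicted rank proves in particular that $k[\Gamma]$ is Cohen--Macaulay in each case and that the resolution has length three as stated.

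Minimality of the whole resolution is established by checking that the matrices representing the three differentials have all entries in the graded maximal ideal $(X_{0},X_{1},X_{2},Y)$; because every differential is assembled from the binomial entries coming from explicit Gr\"obner reductions, this condition can be read off directly. The main obstacle is not a single conceptual step but the case analysis itself: one must treat each of the roughly fourteen sub-cases individually, construct and verify exactness of the three differentials, and be vigilant in the boundary situations (say $\lambda=1$, $q=q'$, or $\mu=0$) where the shape of the generators changes discontinuously and a naive choice can introduce spurious generators. A useful cross-check in each case is to compare the graded Hilbert series of $k[\Gamma]$ computed from $\mathfrak{p}$ against the Euler-characteristic expression $\sum_{i}(-1)^{i}\beta_{i}$ coming from the candidate resolution; agreement of these two expressions in every case corroborates both the generators we retained and the syzygies we discarded.
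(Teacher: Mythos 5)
Your overall architecture --- Gr\"obner basis of $\mathfrak{p}$ from \cite{seng1}, Schreyer-type extraction of first syzygies from the $S$-polynomial reductions, explicit computation of the second syzygies, length-three resolution via Auslander--Buchsbaum, and minimality read off from the entries lying in $(X_0,X_1,X_2,Y)$ --- is the same as the paper's. But there is one genuine gap in the order of operations. You propose to \emph{first} pass to a minimal binomial generating set matching $\beta_0$ and \emph{then} ``compute the first syzygies of the chosen generating set by tracking the $S$-polynomial reductions.'' Schreyer's theorem only guarantees that the $S$-polynomial reduction data generate the syzygy module of a \emph{Gr\"obner basis}; the minimal generating subset of the Patil--Singh set $\mathcal{G}$ is in general not a Gr\"obner basis (this is precisely why the paper insists on working with the larger, redundant set $\mathcal{G}$). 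So the step where you read off $\beta_1$ from $S$-polynomial relations of the trimmed generating set is not justified as stated: you would be computing syzygies of the wrong module, or at best missing generators of the syzygy module of the subset you kept.

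The paper's resolution of this issue is to build the entire (generally non-minimal) free resolution on the full Gr\"obner basis $\mathcal{G}$, and only afterwards prune it homologically: whenever a differential acquires a unit entry (which is exactly what happens in the degenerate parameter regimes $\mu=0$, $\lambda=1$, $q=q'$, $q'=0$, $q-q'=1$), one conjugates by explicit elementary matrices (Lemma 2.2 and Corollary 2.3) and then cancels the corresponding row and column in three consecutive differentials simultaneously (Lemma 2.4). This consecutive cancellation is the missing ingredient in your plan: dropping a redundant generator of $\mathfrak{p}$ changes the syzygies, and the change must be propagated through all three matrices at once, not recomputed ad hoc in each homological degree. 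Two smaller remarks: the paper computes the second syzygy not by another round of Schreyer but by directly solving $B\cdot f=0$ using regular-sequence and UFD divisibility arguments, which is where the real technical content sits and where your sketch is thinnest; and your Hilbert-series comparison is a useful consistency check but cannot substitute for proving exactness, since $\sum_i(-1)^i\beta_i$ only sees the alternating sum and not the individual ranks.
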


\section{Preliminaries}
We now discuss in brief the strategy for proving Theorem 1.1. We first begin with the 
generators of the ideal $\mathfrak{p}$ given in \cite{patsingh}. This set of generators 
denoted by $\mathcal{G}$ is known as the Patil-Singh generators. We use the same 
notations introduced in \cite{patsingh}.
\medskip

\noindent Let $\xi_{11}=X_{1}^2-X_{0}X_{2}$\,.
\bigskip

\noindent For $i\in \,[0,2-r]$, \,\,let 
\,$\phi_{i} \,:= X_{r+i}X_{2}^{q} - X_{0}^{\lambda - 1}X_{i}Y^{w}$\,. 
\bigskip

\noindent For $j\in [0,2-r']$, \,let 
\,$\psi_{j}:= X_{r'+j}X_{2}^{q'}Y^{v-w} - X_{0}^{\nu - 1}X_{j}$\,.
\bigskip

\noindent Let \,$\theta := 
\begin{cases}
Y^{v} - X_{0}^{\mu}X_{r-r'}X_{2}^{q-q'} & {\rm if} \quad r'<r,\\ 
Y^{v} - X_{0}^{\mu}X_{2+r-r'}X_{2}^{q-q'-1} & {\rm if} \quad r'\geq r\,.
\end{cases}$
\bigskip

\noindent It is proved in [\cite{patsingh}; (4.5)] that the set 
$$\mathcal{G} := \{\xi_{11}\}\cup \{\phi_{i}\mid i\in [0, 2-r]\} \cup 
\{\psi_{j}\mid j\in [0, 2-r']\}\cup \{\theta\}$$ 
forms a binomial set of generators for $\mathfrak{p}$. It is not 
always minimal. A subset of this is given in \cite{patil2}, which is a minimal generating 
set for $\mathfrak{p}$. We, however, choose to work with the bigger set $\mathcal{G}$ 
because it is a Gr\"{o}bner basis for $\mathfrak{p}$ with respect to the graded reverse-lexicographic 
monomial order, see \cite{seng1}.
\medskip

The computation with the $S$-polynomials done in \cite{seng1} can be used together with Schreyer's theorem 
to write down the first syzygy for $\mathfrak{p}$. Let us recall Schreyer's theorem first.

\begin{theorem}\label{schreyer}{\it 
Let $K$ be a field, $K[X_{1}, \ldots, X_{n}]$ 
be the polynomial ring and $I$ be an ideal in $K[X_{1}, \ldots, X_{n}]$. 
Let $G := \{g_{1}, \ldots, g_{t}\}$ be an ordered set of generators 
for $I$, which is a Gr\"obner basis, with respect to some fixed monomial 
order on $K[X_{1}, \ldots, X_{n}]$. Let 
${\rm Syz}(g_{1}, \ldots, g_{t}) := \{(a_{1}, \ldots, a_{t})\in R^{t} \mid 
\sum_{i=1}^{t}a_{i}g_{i} = 0\}$. 
Suppose that, for $\,i\neq j$, 
$$ S(g_{i}, g_{j})  =  a_{j}g_{i} - a_{i}g_{j} 
= \sum_{k=1}^{t}h_{k}g_{k}\, \longrightarrow_{G} 0\,.$$
Then, the 
$\,t$-tuples 
$$\left( \begin{array}{lclclcl}
h_{1}\,, & \cdots \,, & h_{i} - a_{j}\,,  & \cdots \,, 
& h_{j} + a_{i}\,,  & \cdots \,, & h_{t}
\end{array}\right)\,$$ 
generate $\,{\rm Syz}(g_{1}, \ldots, g_{t})$.
}
\end{theorem}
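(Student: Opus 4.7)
The plan is to follow the standard module-theoretic proof of Schreyer's theorem by introducing a refined monomial order on the free module $R^t$. First I would define the Schreyer order on $R^t = \bigoplus_{k=1}^{t} R\mathbf{e}_k$ by setting $m\mathbf{e}_i \succ m'\mathbf{e}_j$ whenever $m\cdot\mathrm{LT}(g_i) > m'\cdot\mathrm{LT}(g_j)$ in the fixed order on $R$, with ties broken by requiring $i<j$. This is a well-order on module monomials that is compatible with the map $\mathbf{e}_k \mapsto g_k$ in the sense that the leading position of $\sum a_k\mathbf{e}_k$ controls the leading monomial of $\sum a_k g_k$ whenever no cancellation occurs among the top terms.

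Next I would identify the leading term of each candidate syzygy $\sigma_{ij}$. Writing $\mathrm{LT}(g_i)=c_i m_i$ and $m_{ij}=\mathrm{lcm}(m_i,m_j)$, the coefficients appearing in $S(g_i,g_j)=a_j g_i - a_i g_j$ are $a_j = c_i^{-1}m_{ij}/m_i$ and $a_i = c_j^{-1}m_{ij}/m_j$. Because the reduction $S(g_i,g_j)=\sum_k h_k g_k \to_G 0$ forces $\mathrm{LT}(h_k)\cdot\mathrm{LT}(g_k) < m_{ij}$ for every surviving $k$, each component $h_k\mathbf{e}_k$ is strictly dominated in the Schreyer order by $-a_j\mathbf{e}_i$ (assuming $i<j$, so the tie-break favors position $i$). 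Thus $\mathrm{LT}(\sigma_{ij})=-a_j\mathbf{e}_i$, which as a module monomial is a scalar multiple of $(m_{ij}/m_i)\,\mathbf{e}_i$.

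The core step is to show that any nonzero syzygy $\sigma=(a_1,\ldots,a_t)$ has its leading term in the submodule generated by $\{\mathrm{LT}(\sigma_{ij})\}$. Let $M\mathbf{e}_\ell=\mathrm{LT}(\sigma)$ and let $K$ be the set of indices $k$ for which $\mathrm{LT}(a_k)\cdot\mathrm{LT}(g_k)$ equals the common top monomial $M\cdot\mathrm{LT}(g_\ell)$. Since $\sum_k a_k g_k=0$ and no term outside $K$ attains this monomial, the leading coefficients of $\{a_k g_k : k\in K\}$ must sum to zero. The classical Buchberger-style cancellation lemma then expresses this vanishing as a linear combination of the elementary monomial syzygies attached to the pairs $(i,j)\in K\times K$, which are exactly the leading terms of the corresponding $\sigma_{ij}$. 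Hence some $\mathrm{LT}(\sigma_{ij})$ divides $M\mathbf{e}_\ell$, and subtracting the appropriate monomial multiple of that $\sigma_{ij}$ from $\sigma$ strictly decreases the Schreyer leading term. Because $\succ$ is a well-order, iterating terminates and yields $\sigma$ as an $R$-linear combination of the $\sigma_{ij}$.

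The main obstacle is the Buchberger cancellation step: one must argue carefully that the syzygy among the leading monomials $\{\mathrm{LT}(g_k) : k\in K\}$ detected at the top of $\sigma$ is in fact generated by the pairwise Koszul-type syzygies, and then lift this monomial statement back to the polynomial level via the hypothesis that each $S(g_i,g_j)$ reduces to zero along $G$. Everything else—verifying that the Schreyer order is a well-defined monomial order on $R^t$, computing $\mathrm{LT}(\sigma_{ij})$, and running the termination argument—is bookkeeping once this cancellation principle is in hand.
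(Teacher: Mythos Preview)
Your sketch is the standard Schreyer argument and is correct; it is essentially the proof given in Cox--Little--O'Shea, \emph{Using Algebraic Geometry}, Chapter~5, Theorem~3.2. The paper does not supply its own proof of this statement but simply cites that reference, so there is nothing further to compare.
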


\proof See Chapter 5, Theorem 3.2 in \cite{cox}.\qed
\bigskip

\noindent Our strategy would be to find a resolution of $k[\Gamma]$ which may not be minimal. The following lemmas will be useful to 
remove redundancies and extract the minimal free resolution of the ideal $k[\Gamma]$, 
which sits as a direct summand of the first resolution. 

\begin{lemma}\label{homo1}{\it 
Let 
$$R^{a_{1}}\stackrel{A_{1}}{\longrightarrow} R^{a_{2}}\stackrel{A_{2}}{\longrightarrow} R^{a_{3}}$$ 
be an exact sequence of free modules. Let $Q_{1}$, $Q_{2}$, $Q_{3}$ be invertible matrices of sizes 
$a_{1}$, $a_{2}$, $a_{3}$ respectively. Then, 
$$R^{a_{1}}\stackrel{Q_{2}^{-1}A_{1}Q_{1}}{\longrightarrow} R^{a_{2}}\stackrel{Q_{3}A_{2}Q_{2}}{\longrightarrow} R^{a_{3}}$$
is also an exact sequence of free modules.
}
\end{lemma}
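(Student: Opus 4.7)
The plan is to recognize this as a pure change-of-basis statement: each invertible matrix $Q_{i}$ defines an $R$-module automorphism of $R^{a_{i}}$, and conjugating a two-term complex by such automorphisms preserves exactness. So no homological machinery is needed; only the surjectivity and injectivity of the $Q_{i}$ will be used, at the appropriate endpoints.

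First I would check that the new sequence is actually a complex. Since $A_{2}A_{1}=0$ by assumption, we get
$$ (Q_{3}A_{2}Q_{2})(Q_{2}^{-1}A_{1}Q_{1}) \;=\; Q_{3}(A_{2}A_{1})Q_{1} \;=\; 0,$$
so the composition vanishes. Next I would compute the image of the first new map and the kernel of the second new map in terms of the originals. Because $Q_{1}$ is an automorphism of $R^{a_{1}}$, it is in particular surjective, so
$$\mathrm{im}(Q_{2}^{-1}A_{1}Q_{1}) \;=\; Q_{2}^{-1}\bigl(\mathrm{im}(A_{1}Q_{1})\bigr) \;=\; Q_{2}^{-1}\bigl(\mathrm{im}(A_{1})\bigr).$$
Because $Q_{3}$ is an automorphism of $R^{a_{3}}$, it is in particular injective, so $Q_{3}A_{2}Q_{2}(x)=0$ iff $A_{2}(Q_{2}x)=0$; substituting $y=Q_{2}x$ (hence $x=Q_{2}^{-1}y$) yields
$$\ker(Q_{3}A_{2}Q_{2}) \;=\; Q_{2}^{-1}\bigl(\ker(A_{2})\bigr).$$

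Combining these with the hypothesis $\ker(A_{2})=\mathrm{im}(A_{1})$ gives $\ker(Q_{3}A_{2}Q_{2})=Q_{2}^{-1}(\ker A_{2})=Q_{2}^{-1}(\mathrm{im}\,A_{1})=\mathrm{im}(Q_{2}^{-1}A_{1}Q_{1})$, which is the desired exactness at $R^{a_{2}}$. There is really no obstacle to overcome here; the only mild subtlety is bookkeeping, namely making sure that $Q_{1}$ acts on the source of $A_{1}$ and $Q_{3}$ on the target of $A_{2}$, so that only invertibility (and not any compatibility with the maps $A_{1},A_{2}$) is required. This lemma will later be combined with column/row reductions of the Schreyer syzygy matrix to strip away the redundant generators and expose the minimal free resolution sitting inside.
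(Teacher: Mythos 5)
Your proof is correct and follows essentially the same route as the paper: both arguments amount to observing that conjugating by the automorphisms $Q_{1},Q_{2},Q_{3}$ is a change of basis that transports images and kernels. The paper simply draws the commutative ladder with vertical isomorphisms and asserts the transfer of exactness, whereas you spell out the identities $\mathrm{im}(Q_{2}^{-1}A_{1}Q_{1})=Q_{2}^{-1}(\mathrm{im}\,A_{1})$ and $\ker(Q_{3}A_{2}Q_{2})=Q_{2}^{-1}(\ker A_{2})$ explicitly, which is a slightly more self-contained version of the same argument.
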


\begin{proof}
The following diagram is a commutative diagram is free modules and the vertical maps are isomorphisms:
$$
\xymatrix{
R^{a_{1}} \ar[r]^{A_{1}}& R^{a_{2}} \ar[r]^{A_{2}} & R^{a_{3}} \\
R^{a_{1}} \ar[u]^{Q_{1}}\ar[r]^{Q_{2}^{-1}A_{1}Q_{1}} & R^{a_{2}} \ar[u]_{Q_{2}}\ar[r]^{Q_{3}A_{2}Q_{2}} & R^{a_{3}}\ar[u]_{Q_{3}}
} 
$$
Therefore, 
$R^{a_{1}}\stackrel{Q_{2}^{-1}A_{1}Q_{1}}{\longrightarrow} R^{a_{2}}\stackrel{Q_{3}A_{2}Q_{2}}{\longrightarrow} R^{a_{3}}$ 
is exact since 
$R^{a_{1}}\stackrel{A_{1}}{\longrightarrow} R^{a_{2}}\stackrel{A_{2}}{\longrightarrow} R^{a_{3}}$ 
is exact.
\end{proof}

\begin{corollary}\label{homo2}{\it 
Let 
$$R^{a_{1}}\stackrel{C}{\longrightarrow} R^{a_{2}}\stackrel{B}{\longrightarrow} R^{a_{3}}\stackrel{A}{\longrightarrow} R^{a_{4}}$$ 
be an exact sequence of free modules. Let $P_{1}$, $P_{2}$, $P_{3}$ be invertible matrices of sizes 
$a_{1}$, $a_{2}$, $a_{3}$ respectively. Then, 
$$R^{a_{1}}\stackrel{P_{2}^{-1}CP_{1}}{\longrightarrow} R^{a_{2}}\stackrel{P_{3}BP_{2}}{\longrightarrow} R^{a_{3}}\stackrel{AP_{3}^{-1}}{\longrightarrow} R^{a_{4}}$$
is also an exact sequence of free modules.
}
\end{corollary}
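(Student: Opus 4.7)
The plan is to deduce the four-term statement from two applications of Lemma \ref{homo1}. Since a complex is exact at an interior position if and only if the three-term subcomplex centered at that position is exact, I would verify exactness separately at $R^{a_2}$ and at $R^{a_3}$, treating each position as its own instance of the previous lemma.

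For exactness at $R^{a_2}$, I would invoke Lemma \ref{homo1} on the three-term subcomplex $R^{a_1} \stackrel{C}{\longrightarrow} R^{a_2} \stackrel{B}{\longrightarrow} R^{a_3}$ with the choice of invertible matrices $(Q_1, Q_2, Q_3) = (P_1, P_2, P_3)$. This directly produces the first two maps $P_2^{-1}CP_1$ and $P_3 B P_2$ of the target complex and delivers exactness at $R^{a_2}$.

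For exactness at $R^{a_3}$, I would apply Lemma \ref{homo1} to $R^{a_2} \stackrel{B}{\longrightarrow} R^{a_3} \stackrel{A}{\longrightarrow} R^{a_4}$, but this time with the choice $(Q_1, Q_2, Q_3) = (P_2, P_3^{-1}, I_{a_4})$, where $I_{a_4}$ is the identity matrix of size $a_4$. The newly produced left map then becomes $(P_3^{-1})^{-1} B P_2 = P_3 B P_2$, which matches the middle map fixed by the first application, while the newly produced right map becomes $I_{a_4} \cdot A \cdot P_3^{-1} = AP_3^{-1}$, as desired. Combining the two instances yields the full four-term exact sequence.

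The one place that needs care, and the main obstacle to be navigated, is the bookkeeping in the second application: because Lemma \ref{homo1} conjugates its middle factor $A_2$ on the right by $Q_2$ (and conjugates its left factor $A_1$ on the left by $Q_2^{-1}$), one must choose $Q_2 = P_3^{-1}$ rather than $P_3$ so that the two independent applications agree on the common middle map $P_3 B P_2$. Once this alignment is observed, no further computation is needed and the corollary follows.
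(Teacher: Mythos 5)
Your proof is correct and follows essentially the same route as the paper: two applications of Lemma \ref{homo1} to the two overlapping three-term subcomplexes. Your choice $(Q_1,Q_2,Q_3)=(P_1,P_2,P_3)$ in the first application produces both final maps at $R^{a_2}$ at once, which lets you skip the paper's intermediate observation that ${\rm Im}(B)={\rm Im}(BP_2)$; otherwise the two arguments coincide.
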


\begin{proof}
Consider the sequence $R^{a_{1}}\stackrel{C}{\longrightarrow} R^{a_{2}}\stackrel{B}{\longrightarrow} R^{a_{3}}$. If we take $Q_{1}=P_{1},Q_{2}=P_{2}$ and $Q_{3}=I$ and apply Lemma \ref{homo1}, we get that the sequence 
$R^{a_{1}}\stackrel{P_{2}^{-1}CP_{1}}{\longrightarrow} R^{a_{2}}\stackrel{BP_{2}}{\longrightarrow} R^{a_{3}}$ is exact. 
We further note that the entire sequence\, $R^{a_{1}}\stackrel{P_{2}^{-1}CP_{1}}{\longrightarrow} R^{a_{2}}\stackrel{BP_{2}}{\longrightarrow} R^{a_{3}}\stackrel{A}{\longrightarrow} R^{a_{4}}$ is exact as well, since ${\rm Im}(B)= {\rm Im}(BP_{2})$ and $P_{2}$ is invertible. Let us 
now consider the sequence $R^{a_{2}}\stackrel{BP_{2}}{\longrightarrow} R^{a_{3}}\stackrel{A}{\longrightarrow} R^{a_{4}}$. We take 
$Q_{1}=Q_{3}=I$, $Q_{2}= P_{3}^{-1}$ and apply Lemma \ref{homo1} to arrive at our conclusion.
\end{proof}

\begin{lemma}\label{hom3}
{\it 
Let 
$$\cdots \longrightarrow R^{\beta_{n+1}}\stackrel{A_{n+1}}{\longrightarrow} R^{\beta_{n}}\stackrel{A_{n}}\longrightarrow R^{\beta_{n-1}}\stackrel{A_{n-1}}\longrightarrow R^{\beta_{n-2}} \longrightarrow \cdots$$ 
be an exact sequence of free R modules. 
Let $a_{ij}$ denote the $(i,j)$-th entry of $A_{n}$. Suppose that $a_{lm}=1$ for some l and m, 
$a_{li}=0$ for $i\neq m$ and $a_{jm}=0$ for $j\neq l$. Let $A_{n+1}^{'}$ 
be the matrix obtained by deleting the m-th row from $A_{n+1}$, 
$A_{n-1}^{'}$ the matrix obtained by deleting the l-th column from $A_{n-1}$ and 
$A_{n}^{'}$ the matrix obtained by deleting the l-th row and m-th column 
from $A_{n}$. Then, the sequence 
$$\cdots \longrightarrow R^{\beta_{n+1}}\stackrel{A_{n+1}^{'}}{\longrightarrow} R^{\beta_{n}-1}\stackrel{A_{n}^{'}}
\longrightarrow R^{\beta_{n-1}-1}\stackrel{A_{n-1}^{'}}\longrightarrow R^{\beta_{n-2}} \longrightarrow \cdots$$ 
is exact.
}
\end{lemma}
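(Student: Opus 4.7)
The strategy is to bring $A_n$ into block form by row and column permutations, recognize that the resulting complex splits as the direct sum of a trivial summand and the desired reduced complex, and transfer exactness across the splitting.

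First I would observe that the hypotheses force two additional vanishing statements. The identity $A_{n}A_{n+1}=0$, combined with $a_{li}=0$ for $i\neq m$ and $a_{lm}=1$, forces the $m$-th row of $A_{n+1}$ to vanish: computing the $l$-th row of $A_{n}A_{n+1}$ gives exactly the $m$-th row of $A_{n+1}$. Symmetrically, $A_{n-1}A_{n}=0$ together with the column condition on $A_{n}$ forces the $l$-th column of $A_{n-1}$ to vanish. These are precisely the pieces that must be discarded when forming $A_{n+1}'$ and $A_{n-1}'$.

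Next I would invoke Corollary \ref{homo2}. Let $P_{2}\in\mathrm{GL}_{\beta_{n}}(R)$ and $P_{3}\in\mathrm{GL}_{\beta_{n-1}}(R)$ be the permutation matrices that swap index $m$ with $1$ in $R^{\beta_{n}}$ and index $l$ with $1$ in $R^{\beta_{n-1}}$. Taking $P_{1}=I$ in Corollary \ref{homo2}, the sequence with $A_{n+1}$, $A_{n}$, $A_{n-1}$ replaced by $P_{2}^{-1}A_{n+1}$, $P_{3}A_{n}P_{2}$, $A_{n-1}P_{3}^{-1}$ is again exact, while the ambient maps $A_{n+2}$, $A_{n-2}$ are unaffected. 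After this reduction,
$$P_{3}A_{n}P_{2}=\begin{pmatrix}1 & 0\\ 0 & A_{n}'\end{pmatrix},\qquad P_{2}^{-1}A_{n+1}=\begin{pmatrix}0\\ A_{n+1}'\end{pmatrix},\qquad A_{n-1}P_{3}^{-1}=\begin{pmatrix}0 & A_{n-1}'\end{pmatrix}.$$

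Finally I would write $R^{\beta_{n}}=R\oplus R^{\beta_{n}-1}$ and $R^{\beta_{n-1}}=R\oplus R^{\beta_{n-1}-1}$, isolating the distinguished summands. In these decompositions the stretch
$$R^{\beta_{n+1}}\longrightarrow R^{\beta_{n}}\longrightarrow R^{\beta_{n-1}}\longrightarrow R^{\beta_{n-2}}$$
splits as the direct sum of the trivial two-term complex $R\stackrel{\mathrm{id}}{\longrightarrow}R$, placed in homological degrees $n$ and $n-1$, and the complex
$$R^{\beta_{n+1}}\stackrel{A_{n+1}'}{\longrightarrow}R^{\beta_{n}-1}\stackrel{A_{n}'}{\longrightarrow}R^{\beta_{n-1}-1}\stackrel{A_{n-1}'}{\longrightarrow}R^{\beta_{n-2}}.$$
Since homology commutes with finite direct sums and the trivial summand is acyclic, the reduced complex has the same homology at every position as the original in this window, and is therefore exact there. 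Exactness at positions outside the window is unchanged because the corresponding differentials were not altered.

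There is no serious obstacle; the whole argument is a splitting observation. The only point to be careful about is the permutation bookkeeping in the second step and confirming that Corollary \ref{homo2} applies to all four consecutive positions simultaneously, so that exactness is preserved at every junction and not merely at the middle term.
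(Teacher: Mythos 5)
Your proof is correct. It shares with the paper the two essential preliminary observations: the reduction to $l=m=1$ by permutation matrices (via Corollary \ref{homo2}), and the fact that the complex conditions $A_nA_{n+1}=0$ and $A_{n-1}A_n=0$ force the deleted row of $A_{n+1}$ and the deleted column of $A_{n-1}$ to vanish. Where you diverge is the final step: the paper finishes with an explicit element chase (given $\underline{x}\in\ker(A_n')$, note $(0,\underline{x})\in\ker(A_n)$, lift it through $A_{n+1}$, and project to get a preimage under $A_{n+1}'$, with similar chases at the other positions), whereas you package the same information structurally, splitting the complex as the direct sum of the acyclic two-term complex $R\stackrel{\mathrm{id}}{\to}R$ and the reduced complex, and then invoking that homology commutes with finite direct sums. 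Your route is the standard ``cancel a split trivial summand'' argument from the theory of minimal resolutions; it is cleaner and handles all positions in the window uniformly, at the cost of one genuine bookkeeping point you correctly flag: a transposition of index $m$ with $1$ reorders the remaining rows and columns, so the lower-right block is $A_n'$ only up to a further permutation (using the cyclic permutation, or noting that exactness is insensitive to such a permutation, repairs this). The paper's chase is more pedestrian but avoids any appeal to direct sums of complexes. Both are complete proofs.
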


\begin{proof}
The fact that the latter sequence is a complex is self evident. We need to prove its exactness. 
By the previous lemma we may assume that $l=m=1$, for we choose elementary matrices to permute rows and 
columns and these matrices are always invertible. Now, due to exactness of the first complex we have 
$A_{n-1}A_{n}=0.$ This implies that the first column of $A_{n-1}=0$, which implies that 
${\rm Im}(A_{n-1}) = {\rm Im}(A_{n-1}^{'})$. Therefore, the right exactness of $A_{n+1}$ is preserved. 
By a similar argument we can prove that the left exactness of $A_{n+1}^{'}$ is preserved.
\medskip

Let $\left({\bf\underline x}\right)$ denote a tuple with entries from $R$. 
If $\left({\bf\underline x}\right)\in {\rm ker}(A_{n}^{'})$, then $\left(0, {\bf\underline x}\right)\in {\rm ker}(A_{n})$. 
There exists $\left({\bf\underline y}\right)\in R^{\beta_{n+1}}$ such that 
$A_{n-1}\left({\bf\underline y}\right) = \left(0, {\bf\underline x}\right)$. It follows that 
$A_{n-1}^{'}\left({\bf\underline y}\right) = \left({\bf\underline x}\right)$, proving the left exactness of 
$A_{n}^{'}$. By a similar argument we can prove the right exactness of $A_{n}^{'}$.
\end{proof}
\medskip

\noindent The proof of Theorem 1.1 is largely divided into sections 3 and 4. Each section has various subsections named 
according to the various sub-cases mentioned in the statement of the theorem.


\section{\bf{$W\neq\emptyset$; \,\bf{$r=1,r'=2$} }}

In this case, because of $r < r'$ we have $\nu = \lambda + \mu$ and $q'< q$. Moreover, it is easy 
to see that $\mu = 0$ and $q-q'= 1$ can not happen simultaneously. Let, 
$\mathcal{G} = \{\xi_{11}, \phi_{0}, \phi_{1}, \psi_{0}, \theta\}$, such that 
\begin{eqnarray*}
\xi_{11} & = & X_{1}^2-X_{0}X_{2}\\
\phi_{0} & = & X_1X_{2}^q-X_{0}^{\lambda}Y^{w}\\
\phi_{1} & = & X_{2}^{q+1}-X_{0}^{\lambda -1}X_1Y^{w}\\
\psi_{0} & = & X_2^{q'+1}Y^{v-w}-X_{0}^{\nu}= X_2^{q'+1}Y^{v-w}-X_{0}^{\lambda+\mu}\\
\theta & = & Y^{v}-X_{0}^{\mu}X_{1}X_{2}^{q-q'-1}\\
\end{eqnarray*}

\noindent Let $A=[\xi_{11}, \phi_{0},\phi_{1},\psi_{0},\theta ]$, which is the 0-th syzygy matrix. 
We know that $\mathcal{G}$ is a Gr\"{o}bner Basis with respect to the graded reverse lexicographic 
order from the work done in \cite{seng1}. We indicate below the exact results of \cite{seng1}, which 
have been used together with Theorem \ref{schreyer} for computing the generators for the first syzygy module. 

\begin{eqnarray*}
R_{1} & = & \left(-X_{2}^{q}, X_{1}, -X_{0}, 0, 0 \right),\quad {\rm by \quad [6.1, \mbox{\cite{seng1}}]}\\
R_{2} & = & \left(-X_{2}^{q+1} + X_{0}^{\lambda-1}X_{1}Y^w, 0, X_{1}^2 - X_{0}X_{2}, 0, 0 \right),\quad 
{\rm by \quad [3.1, \mbox{\cite{seng1}}]}\\
R_{3} & = & \left(X_{0}^{\lambda + \mu} - X_{2}^{q'+1}Y^{v-w}, 0, 0, X_{1}^2 - X_{0}X_{2}, 0 \right), \quad {\rm by \quad [3.1, \mbox{\cite{seng1}}]}\\
R_{4} & = & \left(X_{0}^{\mu}X_{1}X_{2}^{q-q'-1} - Y^v, 0, 0, 0, X_{1}^2 - X_{0}X_{2}\right), \quad {\rm by \quad [3.1, \mbox{\cite{seng1}}]}\\
R_{5} & = & \left(Y^{w} X_{0}^{\lambda-1}, -X_{2}, X_{1}, 0, 0\right), \quad {\rm by \quad [4.5, \mbox{\cite{seng1}}]}\\
R_{6} & = & \left(0, -Y^{v-w}, 0, X_{1}X_{2}^{q-q'-1}, -X_{0}^{\lambda}\right), \quad {\rm by \quad [8.4, \mbox{\cite{seng1}}]}\\
R_{7} & = & \left(0, X_{0}^{\mu}X_{1}X_{2}^{q-q'-1} - Y^{v}, 0, 0, X_{1}X_{2}^{q} - X_{0}^{\lambda}Y^w\right), 
\quad {\rm by \quad [3.1, \mbox{\cite{seng1}}]}\\
R_{8} & = & \left(-X_{0}^{\mu +\lambda-1}X_{2}^{q-q'-1}, 0, -Y^{v-w}, X_{2}^{q-q'}, -X_{0}^{\lambda-1}X_{1}\right), 
\quad {\rm by \quad [8.1, \mbox{\cite{seng1}}]}\\
R_{9} & = & \left(0, 0, X_{0}^{\mu}X_{1}X_{2}^{q-q'-1} - Y^{v}, 0, X_{2}^{q+1} - X_{0}^{\lambda-1}X_{1}Y^w\right),
\quad {\rm by \quad [3.1, \mbox{\cite{seng1}}]}\\
R_{10} & = & \left(0, X_{0}^{\mu}, 0, -Y^w, X_{2}^{q'+1}\right), \quad {\rm by \quad [9.2, \mbox{\cite{seng1}}]}
\end{eqnarray*}
\noindent We observe that 
\begin{eqnarray*}
R_{2} & = & X_{1} \cdot R_{5} + X_{2} \cdot R_{1}\\
R_{7} & = & Y^w \cdot R_{6} + (X_{1}X_{2}^{q-q'-1}) \cdot R_{10}\\
R_{9} & = & Y^w \cdot R_{8} + X_{2}^{q-q'} \cdot R_{10} + X_{0}^{\mu}X_{2}^{q-q'-1} \cdot R_{5}
\end{eqnarray*}
\noindent After removing $R_{2}$, $R_{7}$, $R_{9}$ from the list we get our 1st syzygy matrix 
\begin{eqnarray*}
B & = & \left[{}^{t}R_{1}, ^{t}R_{3}, ^{t}R_{4}, ^{t}R_{5}, ^{t}R_{6}, ^{t}R_{8}, ^{t}R_{10}\right]\\
{} & = & {\scriptsize \left[\begin{matrix}
-X_{2}^q & X_{0}^{\lambda + \mu}-X_{2}^{q'+1}Y^{v-w} & X_{0}^{\mu}X_{1}X_{2}^{q-q'-1}-Y^{v} & Y^{w}X_{0}^{\lambda-1} & 0 &-X_{0}^{\lambda+\mu-1}X_{2}^{q-q'-1} & 0\\
X_{1} & 0 & 0 &-X_{2} & -Y^{v-w} & 0 & X_{0}^{\mu}\\
-X_{0} & 0 & 0 & X_{1} & 0 & -Y^{v-w} & 0\\
0 & X_{1}^2-X_{0}X_{2} & 0 & 0 & X_{1}X_{2}^{q-q'-1} & X_{2}^{q-q'} & -Y^{w}\\
0 & 0 & X_{1}^2-X_{0}X_{2} & 0 & -X_{0}^{\lambda}&-X_{0}^{\lambda-1}X_{1} & X_{2}^{q'+1}
\end{matrix}
\right]}
\end{eqnarray*}

\noindent In order to determine the second syzygy we proceed to determine the kernel of the map given by the matrix $B$. 
Suppose that 

$$B \, \cdot \, 
\left[
\begin{matrix}
f_{1}\\
f_{2}\\
f_{3}\\
f_{4}\\
f_{5}\\
f_{6}\\
f_{7}
\end{matrix}
\right]
=0$$

\noindent Multiplying the 3rd row with the column vector we get
\begin{equation}
\label{eq1} -X_{0}f_{1} + X_{1}f_{4}-Y^{v-w}f_{6} = 0 
\end{equation}
and therefore
$$X_{1}f_{4}\in \langle -X_{0},Y^{v-w} \rangle .$$
\smallskip

\noindent Therefore $f_{4}\in \langle X_{0},Y^{v-w} \rangle $, since 
$X_{0},Y^{v-w}, X_{1}$ form a regular sequence in $R$. Hence, we can write 
$$f_{4} = X_{0}p_{1} + Y^{v-w}p_{2},$$ 
\smallskip

\noindent for some polynomials $p_{1}$ and $p_{2}$. Plugging this value in (3.1) we get 
$$X_{0}(f_{1} - X_{1}p_{1}) = Y^{v-w}(X_{1}p_{2} - f_{6})$$
\smallskip

\noindent which implies that $X_{0}\mid X_{1}p_{2} - f_{6}$ and $Y^{v-w}\mid f_{1} - X_{1}p_{1}$ (follows from fact that R is a UFD). 
If we write $f_{1} = X_{1}p_{1}+Y^{v-w}p_{3}$, we finally obtain 
\begin{eqnarray*}
f_{1} & = & X_{1}p_{1} + Y^{v-w}p_{3}\\
f_{6} & = & -X_{0}p_{3} + X_{1}p_{2}\\
f_{4} & = & X_{0}p_{1} + Y^{v-w}p_{2}.
\end{eqnarray*}

\noindent Multiplying the 2nd row of $A$ with the column vector we get
\begin{equation}
\label{eq2} X_{1}f_{1}-X_{2}f_{4}-Y^{v-w}f_{5}+X_{0}^{\mu}f_{7}=0
\end{equation}
\smallskip

\noindent Eliminating $f_{4}$ from (3.1) and (3.2) we obtain
\begin{equation}
\label{eq3} (X_{1}^2-X_{0}X_{2})f_{1}=X_{1}Y^{v-w}f_{5}-X_{0}^{\mu}X_{1}f_{7}+X_{2}Y^{v-w}f_{6}
\end{equation}
\smallskip

\noindent and eliminating $f_{1}$ from (3.1) and (3.2) we obtain
\begin{equation}
\label{eq4} (X_{1}^2-X_{0}X_{2})f_{4}=X_{0}Y^{v-w}f_{5}-X_{0}^{\mu+1}f_{7}+X_{1}Y^{v-w}f_{6}
\end{equation}
\smallskip

\noindent Eliminating $f_{6}$ from (3.3) and (3.4) we obtain
$$Y^{v-w}f_{5}-X_{0}^{\mu}f_{7} = X_{1}f_{1} - X_{2}f_{4}.$$
\smallskip

\noindent Plugging in the values of previously obtained $f_{1}$ and $f_{4}$ in the above expression we get 
$$Y^{v-w}(f_{5} + X_{2}p_{2} - X_{1}p_{3}) = X_{0}^{\mu}f_{7} + (X_{1}^2 - X_{0}X_{2})p_{1}.$$ 
\smallskip

\noindent Taking either side of the equality as $Y^{v-w}p_{4}$ we get
$$X_{0}^{\mu}f_{7}+(X_{1}^2 - X_{0}X_{2})p_{1} = Y^{v-w}p_{4}$$

$$f_{5} = p_{4} + X_{1}p_{3} - X_{2}p_{2}.$$
\smallskip

\noindent We now multiply the fourth and the fifth rows of the matrix $A$ with the column vector to get 
\begin{equation}
(X_{1}^2-X_{0}X_{2})f_{2} = -X_{1}X_{2}^{q-q'-1}f_{5}-X_{2}^{q-q'}f_{6}+Y^{w}f_{7}
\end{equation}
\smallskip

\begin{equation}
(X_{1}^2-X_{0}X_{2})f_{3}=X_{0}^{\lambda}f_{5}+X_{0}^{\lambda-1}X_{1}f_{6}-X_{2}^{q'+1}f_{7}
\end{equation}
\smallskip

\noindent Plugging these values in (3.6) we get  
$$(X_{1}^2 - X_{0}X_{2})(X_{0}^{\mu}f_{3} - X_{2}^{q'+1}p_{1} - X_{0}^{\nu-1}p_{2})=p_{4}(X_{0}^{\nu} - X_{2}^{q'+1}y^{v-w});$$ 
\smallskip

\noindent which implies that $(X_{1}^2 - X_{0}X_{2})\mid p_{4}$. Taking $p_{4}=(x_{1}^2-x_{0}x_{2})p_{5}$, 
we get 
$$X_{0}^{\mu}(f_{3} - X_{0}^{\lambda-1}p_{2} - X_{0}^{\lambda}p_{5}) = X_{2}^{q'+1}(p_{1} - Y^{v-w}p_{5}).$$ 
\smallskip

\noindent Therefore, $X_{0}^{\mu}\mid(p_{1} - Y^{v-w}p_{5})$ and 
$X_{2}^{q'+1}\mid(f_{3} - X_{0}^{\lambda-1}p_{2} - X_{0}^{\lambda}p_{5})$. Taking either quotients as $p_{6}$ we get 
\begin{eqnarray*}
p_{1} & = & Y^{v-w}p_{5} + X_{0}^{\mu}p_{6}\\[2mm]
f_{3} & = & X_{2}^{q'+1}p_{6} + X_{0}^{\lambda-1}p_{2} + X_{0}^{\lambda}p_{5}\\[2mm]
f_{7} & = & -(X_{1}^2 - X_{0}X_{2})p_{6}
\end{eqnarray*}
\smallskip

\noindent Next, plugging these values in (3.5), we get 
$$f_{2}= -X_{2}^{q-q'-1}p_{3} - X_{1}X_{2}^{q-q'-1}p_{5} - Y^{w}p_{6}.$$
Therefore, we obtain 
$$p_{1} = Y^{v-w}p_{5} + X_{0}^{\mu}p_{6}$$
\smallskip

\noindent and
\begin{eqnarray*}
f_{1} & = & X_{1}p_{1} + Y^{v-w}p_{3}\\[2mm]
f_{2} & = & -X_{2}^{q-q'-1}p_{3} - X_{1}X_{2}^{q-q'-1}p_{5} - Y^{w}p_{6}\\[2mm]
f_{3} & = & X_{0}^{\lambda-1}p_{2} + X_{0}^{\lambda}p_{5} + X_{2}^{q'+1}p_{6}\\[2mm]
f_{4} & = & X_{0}p_{1} + Y^{v-w}p_{2}\\[2mm]
f_{5} & = & (X_{1}^2 - X_{0}X_{2})p_{5} + X_{1}p_{3} - X_{2}p_{2}\\[2mm]
f_{6} & = & -X_{0}p_{3} + X_{1}p_{2}\\[2mm]
f_{7} & = & -(X_{1}^{2} - X_{0}X_{2})p_{6}
\end{eqnarray*}
\smallskip

\noindent The tuple $(f_{1}, \ldots , f_{7})$ indeed belongs to the 
kernel of the linear map defined by B, for any choice of $p_{i}$'s. 
Moreover,
$${\scriptsize \left[
\begin{matrix}
0 & Y^{v-w} & X_{1}Y^{v-w} & X_{1}X_{0}^{\mu}\\
0 & -X_{2}^{q-q'-1} & -X_{1}X_{2}^{q-q'-1} & -Y^{w}\\
X_{0}^{\lambda-1} & 0 & X_{0}^{\lambda} & X_{2}^{q'+1}\\
Y^{v-w} & 0 & X_{0}Y^{v-w} & X_{0}^{\mu+1}\\
-X_{2} & X_{1} & X_{1}^{2} - X_{0}X_{2} & 0\\
X_{1} & -X_{0} & 0 & 0\\
0 & 0 & 0 & -(X_{1}^2 - X_{0}X_{2})
\end{matrix}
\right]} \cdot 
{\scriptsize \left[
\begin{matrix}
p_{2}\\[2mm]
p_{3}\\[2mm]
p_{5}\\[2mm]
p_{6}
\end{matrix}
\right]} = 
{\scriptsize \left[
\begin{matrix}
f_{1}\\f_{2}\\f_{3}\\f_{4}\\f_{5}\\f_{6}\\f_{7}
\end{matrix}
\right]}$$
\smallskip

\noindent Hence the matrix 
$${\scriptsize \left[\begin{matrix}
0 & Y^{v-w} & X_{1}Y^{v-w} & X_{1}X_{0}^{\mu}\\
0 & -X_{2}^{q-q'-1} & -X_{1}X_{2}^{q-q'-1} & -Y^{w}\\
X_{0}^{\lambda-1} & 0 & X_{0}^{\lambda} & X_{2}^{q'+1}\\
Y^{v-w} & 0 & X_{0}Y^{v-w} & X_{0}^{\mu+1}\\
-X_{2} & X_{1} & (X_{1}^{2} - X_{0}X_{2}) & 0\\
X_{1} & -X_{0} & 0 & 0\\
0 & 0 & 0 & -(X_{1}^2 - X_{0}X_{2})
\end{matrix}
\right]}$$
\smallskip

\noindent is the second syzygy matrix. Let $L_{i}$ denote the $i$-th column from the left 
of the above matrix. We observe that 
$L_{3}=x_{0}\cdot L_{1}+x_{1}\cdot L_{2}$. If we remove the third column of the matrix, 
the second syzygy matrix in its reduced form is
$${\scriptsize C = \left[
\begin{matrix}
0 & Y^{v-w} & X_{1}X_{0}^{\mu}\\
0 & -X_{2}^{q-q'-1} & -Y^{w}\\
X_{0}^{\lambda-1} & 0 & X_{2}^{q'+1}\\
Y^{v-w} & 0 & X_{0}^{\mu+1}\\
-X_{2} & X_{1} & 0\\
X_{1} & -X_{0} & 0\\
0 & 0 & -(X_{1}^2 - X_{0}X_{2})
\end{matrix}
\right]}$$
\smallskip

Now we calculate the third syzygy, that is the kernel of the map given by the matrix C 
under choice of standard basis. Let $(g_{1},g_{2},g_{3})$ be an element of the kernel. This means 
that 
$${\scriptsize\left[
\begin{matrix}
0 & Y^{v-w} & X_{1}X_{0}^{\mu}\\
0 & -X_{2}^{q-q'-1} & -Y^{w}\\
X_{0}^{\lambda-1} & 0 & X_{2}^{q'+1}\\
Y^{v-w} & 0 & X_{0}^{\mu+1}\\
-X_{2} & X_{1} & 0\\
X_{1} & -X_{0} & 0\\
0 & 0 & -(X_{1}^2 - X_{0}X_{2})
\end{matrix}
\right]} \cdot 
{\scriptsize \left[
\begin{matrix}
g_{1}\\[2mm]
g_{2}\\[2mm]
g_{3}
\end{matrix}
\right]} \quad = \quad 0.$$
\smallskip

\noindent Multiplication of the 7th row with the column vector gives 
$-(X_{1}^2 - X_{0}X_{2})g_{3} = 0$ which implies $g_{3}=0$. 
Multiplication of the 5th and 6th row with the column vector gives 
$$X_{1}g_{1} - X_{2}g_{2} = 0$$ and 
$$-X_{2}g_{1} + X_{1}g_{2} = 0$$ implying that $g_{1} = g_{2} = 0$. 
So we get that the map given by matrix C is injective. 
\medskip

It follows from our preceding discussion that 
$$0\longrightarrow R^{3}\stackrel{\theta{_{C}}}{\longrightarrow} R^{7}\stackrel{\theta{_{B}}}{\longrightarrow} R^{5}\stackrel{\theta{_{A}}}{\longrightarrow} R\longrightarrow k[\Gamma]\longrightarrow0$$ 
is a free resolution of the monomial curve in question, where $\theta_{A},\theta_{B},\theta_{C}$ are maps given by the 
matrices $A$, $B$ and $C$ respectively. What we are yet to achieve is its minimality. The resolution would be minimal 
if and only if each and every entry of the matrices $A$, $B$ and $C$ belong to the maximal ideal 
$(X_{0}, X_{1}, X_{2}, Y)$. A scrutiny of the entries show that the free resolution obtained above 
is minimal if and only if $\mu\neq 0,q-q'\neq 1,\lambda \neq 1$. Therefore, following are the cases 
in which we have non-minimality:

\begin{enumerate}
\item[(a)] $\mu=0$, $q-q'\neq 1$;
\smallskip

\item[(b)] $\mu\neq 0$, $q-q'=1$, $\lambda\neq 1$;
\smallskip

\item[(c)] $\mu\neq 0$, $q-q'\neq 1$, $\lambda=1$;
\smallskip

\item[(d)] $\mu\neq 0,q-q'=1$, $\lambda=1$.
\end{enumerate}

\noindent The only  case of possible non-minimality that we have not considered is when $\mu=0$, $q-q'=1$, for the 
reason that it can not occur under the given conditions. Therefore,  In order to extract a minimal free resolution 
from the non-minimal one we apply the elementary matrices $E_{ij}(*)$ and invoke Lemma \ref{homo1} and Corollary \ref{homo2}. The matrices $E_{ij}(\alpha)$ would denote the matrix whose $(i,j)^{th}$ entry is $\alpha$, all diagonal entires 1 and rest of the entries are 0. 
\medskip

\noindent\textbf{Case (a): $\mu=0$, \, $q-q'\neq 1$}
\smallskip

\noindent In this case, we see that the $(2,7)$-th entry of matrix $B$ is 1.  
Let $P_{3} = E_{42}(Y^{w})E_{52}(-X_{2}^{(q'+1)})$ and 
$P_{2} = E_{71}(-X_{1})E_{74}(X_{2})E_{75}(Y^{v-w})$. Then
$${\tiny P_{3}BP_{2} = \left[
\begin{matrix}
-X_{2}^{q} & X_{0}^{\lambda} - X_{2}^{q'+1}Y^{v-w}& X_{1}X_{2}^{q-q'-1}-Y^{v} & Y^{w} X_{0}^{\lambda-1} & 0 & -X_{0}^{\lambda-1}X_{2}^{q-q'-1} & 0\\
0 & 0 & 0 & 0 & 0 & 0 & 1 \\
-X_{0} & 0 & 0 & X_{1} & 0 & -Y^{v-w} & 0\\
X_{1}Y^{w} & X_{1}^2-X_{0}X_{2} & 0 & -X_{2}Y^{w} & X_{1}X_{2}^{q-q'-1}-Y^{v} & X_{2}^{q-q'} & 0\\
-X_{1}X_{2}^{q'+1} & 0 & X_{1}^2 - X_{0}X_{2} & X_{2}^{q'+2} & Y^{v-w}X_{2}^{q'+1} - X_{0}^{\lambda} & -X_{0}^{\lambda-1}X_{1} & 0
\end{matrix}
\right]},$$

$${\scriptsize P_{2}^{-1}C = \left[
\begin{matrix}
0 & Y^{v-w} & X_{1}\\
0 & -X_{2}^{q-q'-1} & -Y^{w}\\
X_{0}^{\lambda-1} & 0 & X_{2}^{q'+1}\\
Y^{v-w} & 0 & X_{0}\\
-X_{2} & X_{1} & 0\\
X_{1} & -X_{0} & 0\\
0 & 0 & 0
\end{matrix}
\right]} \quad {\rm and} \quad 
AP_{3}^{-1} = [\xi_{11},0,\phi_{1},\psi_{0},\theta].$$

\noindent Now we apply Corollary \ref{homo2}. and obtain the minimal free resolution 

$$0\longrightarrow R^{3}\stackrel{\theta_{C}}{\longrightarrow} R^{6}\stackrel{\theta_{B}}{\longrightarrow} R^{4}\stackrel{\theta_{A}}{\longrightarrow} R\longrightarrow k[\Gamma]\longrightarrow 0,$$ 

\noindent where the maps ${\theta_{A}}$, ${\theta_{B}}$ and ${\theta_{C}}$ are given by the syzygy matrices $A$, $B$ and $C$ respectively, given by
$$A=[\xi_{11},\phi_{1},\psi_{0},\theta],$$

$${\scriptsize B=\left[
\begin{matrix}
-X_{2}^{q} & X_{0}^{\lambda } - X_{2}^{q'+1}Y^{v-w}& X_{1}X_{2}^{q-q'-1}-Y^{v} & Y^{w} X_{0}^{\lambda-1} & 0 & -X_{0}^{\lambda-1}X_{2}^{q-q'-1}\\
-X_{0} &0& 0 & X_{1} & 0 & -Y^{v-w}\\
X_{1}Y^{w} &X_{1}^2-X_{0}X_{2}& 0 & -X_{2}Y^{w} & X_{1}X_{2}^{q-q'-1}-Y^{v} & X_{2}^{q-q'}\\
-X_{1}X_{2}^{q'+1} &0& X_{1}^2 - X_{0}X_{2} & X_{2}^{q'+2} & Y^{v-w}X_{2}^{q'+1} - X_{0}^{\lambda} & -X_{0}^{\lambda-1}X_{1}
\end{matrix}
\right]},$$

$${\scriptsize C = \left[
\begin{matrix}
0 & Y^{v-w} & X_{1}\\
0 & -X_{2}^{q-q'-1} & -Y^{w}\\
X_{0}^{\lambda-1} & 0 & X_{2}^{q'+1}\\
Y^{v-w} & 0 & X_{0}\\
-X_{2} & X_{1} & 0\\
X_{1} & -X_{0} & 0
\end{matrix}
\right]}.$$
The entries of the matrices are from the maximal ideal and hence the resolution 
is minimal. The total Betti numbers in this case are [4,6,3].
\bigskip

\noindent\textbf{Case (b): $\mu\neq 0$, $\lambda\neq 1$, $q-q'=1$}
\smallskip

\noindent We take $P_{2}=E_{12}(-Y^{v-w})E_{52}(-X_{1})E_{62}(X_{0})$ and $P_{1}=E_{23}(-Y^{w})$ and proceeding as before we obtain 
the minimal free resolution 

$$0\longrightarrow R^{2}\stackrel{\theta_{C}}{\longrightarrow} R^{6}\stackrel{\theta_{B}}{\longrightarrow} R^{5}\stackrel{\theta_{A}}{\longrightarrow} R\longrightarrow k[\Gamma]\longrightarrow 0,$$ 

\noindent with the syzygy matrices given by 

$$A = [\xi_{11},\phi_{0},\phi_{1},\psi_{0},\theta],$$

$${\scriptsize B=\left[
\begin{matrix}
-X_{2}^{q} & X_{0}^{\mu}X_{1}-Y^{v} & Y^{w}X_{0}^{\lambda-1} & 0 & -X_{0}^{\lambda+\mu-1} & 0\\
X_{1} & 0 & -X_{2} & -Y^{v-w} & 0 & X_{0}^{\mu}\\
-X_{0} & 0 & X_{1} & 0 & -Y^{v-w} & 0\\
0 & 0 & 0 & X_{1} & X_{2} & -Y^{w}\\
0 & X_{1}^2-X_{0}X_{2} & 0 & -X_{0}^{\lambda} & -X_{0}^{\lambda-1}X_{1} & X_{2}^{q'+1}
\end{matrix}
\right]},$$

$${\scriptsize C=\left[
\begin{matrix}
0 & X_{1}X_{0}^{\mu}- Y^{v}\\
X_{0}^{\lambda-1} & X_{2}^{q'+1}\\
Y^{v-w} & X_{0}^{\mu+1}\\
-X_{2} & -X_{1}Y^{w}\\
X_{1} & X_{0}Y^{w}\\
0 & -(X_{1}^{2} - X_{0}X_{2})
\end{matrix}
\right]}.$$
Therefore, the total Betti numbers are $[5,6,2]$.
\bigskip

\noindent\textbf{Case (c): $\mu\neq 0$, $\lambda = 1$, $q-q'\neq 1$}
\smallskip

\noindent This will affect the $(3,1)$-th entry of C. We take $P_{2}=E_{43}(Y^{v-w})E_{53}(-X_{2})E_{63}(X_{1})$ 
and $P_{1}=E_{13}(-X_{2}^{q'+1})$. So, again applying the lemmas we get the minimal free resolution as 

$$0\longrightarrow R^{2}\stackrel{\theta_{C}}{\longrightarrow} R^{6}\stackrel{\theta_{B}}{\longrightarrow} R^{5}\stackrel{\theta_{A}}{\longrightarrow} R\longrightarrow k[\Gamma]\longrightarrow 0,$$ 

\noindent with the syzygy matrices given by 
 
$$A = [\xi_{11},\phi_{0},\phi_{1},\psi_{0},\theta],$$
$${\scriptsize B = \left[
\begin{matrix}
-X_{2}^{q} & X_{0}^{\mu +1} - X_{2}^{q'+1}Y^{v-w} & Y^{w} & 0 & -X_{0}^{\mu}X_{2}^{q-q'-1} & 0\\
X_{1} & 0 & -X_{2} & -Y^{v-w} & 0 & X_{0}^{\mu}\\
-X_{0} & 0 & X_{1} & 0 & -Y^{v-w} & 0\\
0 & X_{1}^{2} - X_{0}X_{2} & 0 & X_{1}X_{2}^{q-q'-1} & X_{2}^{q-q'} & -Y^{w}\\
0 & 0 & 0 & -X_{0} & -X_{1} & X_{2}^{q'+1}
\end{matrix}
\right]},$$
$${\scriptsize C=\left[
\begin{matrix}
Y^{v-w} & X_{1}X_{0}^{\mu}\\
-X_{2}^{q-q'-1} & -Y^{w}\\
0 & X_{0}^{\mu+1}-X_{2}^{q'+1}Y^{v-w}\\
X_{1} & X_{2}^{q'+2}\\
-X_{0} & -X_{2}^{q'+1}X_{1}\\
0 & -(X_{1}^{2} - X_{0}X_{2})
\end{matrix}\right]}.$$
Therefore, the total Betti numbers are $[5,6,2]$.
\bigskip

\noindent\textbf{Case (d): $\mu\neq 0$, $\lambda = 1$, $q-q' = 1$}
\smallskip

\noindent Here, along with the preceding case, the $(2,1)$-th entry of the 
second syzygy matrix in preceding case will be affected. So we take 
$P_{2}=E_{12}(-Y^{v-w})E_{42}(-X_{1})E_{52}(X_{0})$ and 
$P_{1}=E_{12}(-Y^{w})$. The minimal free resolution which we obtain 
is 

$$0\longrightarrow R^{1}\stackrel{\theta_{C}}{\longrightarrow} R^{5}\stackrel{\theta_{B}}{\longrightarrow} R^{5}\stackrel{\theta_{A}}{\longrightarrow} R\longrightarrow k[\Gamma]\longrightarrow 0,$$ 

\noindent with the syzygy matrices given by 
$$A = [\xi_{11},\phi_{0},\phi_{1},\psi_{0},\theta],$$
$${\scriptsize B=\left[
\begin{matrix}
-X_{2}^{q} & Y^{w} & 0 & -X_{0}^{\mu} & 0\\
X_{1} & -X_{2} & -Y^{v-w} & 0 & X_{0}^{\mu}\\
-X_{0} & X_{1} & 0 & -Y^{v-w} & 0\\
0 & 0 & X_{1} & X_{2} & -Y^{w}\\
0 & 0 & -X_{0} & -X_{1} & X_{2}^{q'+1}
\end{matrix}
\right]}$$
$${\scriptsize C=\left[
\begin{matrix}
X_{1}X_{0}^{\mu}-Y^{v}\\
X_{2}^{q'+1}\\
-X_{0}^{\mu+1}-X_{2}^{q'+1}Y^{v-w}\\
X_{2}^{q'+2}-X_{1}Y^{w}\\
X_{0}Y^{w}-X_{2}^{q'+1}X_{1}\\
-(X_{1}^{2} - X_{0}X_{2})
\end{matrix}\right]}.$$
Therefore, the total Betti numbers are $[5,5,1]$.
 
\section{\bf{$W\neq\emptyset ;\, r=1,r'=1$}}
\noindent In this case, because of $r = r'$ we have $\nu = \lambda + \mu$ and $q' < q$. Let 
$\mathcal{G} = \{\xi_{11}, \phi_{0}, \phi_{1}, \psi_{0}, \psi_{1}, \theta\}$, such that 
\begin{eqnarray*}
\xi_{11} & = & X_{1}^2-X_{0}X_{2}\\
\phi_{0} & = & X_{1}X_{2}^{q} - X_{0}^{\lambda}Y^{w}\\
\phi_{1} & = & X_{2}^{q+1} - X_{0}^{\lambda -1}X_{1}Y^{w}\\
\psi_{0} & = & X_{1}X_{2}^{q'}Y^{v-w} - X_{0}^{\lambda + \mu}\\
\psi_{1} & = & X_{2}^{q'+1}Y^{v-w} - X_{0}^{\lambda + \mu - 1}X_{1}\\
\theta & = & Y^{v} - X_{2}^{q-q'}X_{0}^{\mu}\\
\end{eqnarray*}

\noindent Let $A=[\xi_{11}, \phi_{0},\phi_{1},\psi_{0},\psi_{1}, \theta ]$, which is the 0th syzygy matrix. We know that 
$\mathcal{G}$ is a Gr\"{o}bner Basis with respect to the graded reverse lexicographic order from the work done in 
\cite{seng1}. We indicate below the exact results of \cite{seng1}, which have been used together with Theorem \ref{schreyer} 
for computing the generators for the first syzygy module. 

\begin{eqnarray*}
R_{1} & = & \left(-X_{2}^{q}, X_{1}, -X_{0}, 0, 0, 0\right) \quad {\rm by \quad [6.1, \mbox{\cite{seng1}}]}\\
R_{2} & = & \left(X_{0}^{\lambda-1}X_{1}Y^{w} - X_{2}^{q+1}, 0, X_{1}^{2} - X_{0}X_{2}, 0, 0, 0\right)
\quad {\rm by \quad [3.1, \mbox{\cite{seng1}}]}\\
R_{3} & = & \left(-X_{2}^{q'}Y^{v-w}, 0, 0, X_{1}, -X_{0}, 0\right)
\quad {\rm by \quad [7.2, \mbox{\cite{seng1}}]}\\
R_{4} & = & \left(X_{0}^{\lambda + \mu-1}X_{1} - X_{2}^{q'+1}Y^{v-w}, 0, 0, 0, X_{1}^{2} - X_{0}X_{2}, 0\right)
\quad {\rm by \quad [3.1, \mbox{\cite{seng1}}]}\\
R_{5} & = & \left(X_{0}^{\mu}X_{2}^{q-q'}-Y^{v}, 0, 0, 0, 0, X_{1}^2-X_{0}X_{2}\right)
\quad {\rm by \quad [3.1, \mbox{\cite{seng1}}]}\\
R_{6} & = & \left(X_{0}^{\lambda-1}Y^{w}, -X_{2}, X_{1}, 0, 0, 0\right)
\quad {\rm by \quad [4.5, \mbox{\cite{seng1}}]}\\
R_{7} & = & \left(0, -Y^{v-w}, 0, X_{2}^{q-q'}, 0, -X_{0}^{\lambda}\right)
\quad {\rm by \quad [8.1, \mbox{\cite{seng1}}]}\\
R_{8} & = & \left(X_{0}^{\lambda + \mu-1}X_{2}^{q-q'-1}, -Y^{v-w}, 0, 0, X_{1}X_{2}^{q-q'-1}, -X_{0}^{\lambda}\right)
\quad {\rm by \quad [8.5, \mbox{\cite{seng1}}]}\\
R_{9} & = & \left(0, X_{0}^{\mu}X_{2}^{q-q'}-Y^{v}, 0, 0, 0, X_{1}X_{2}^{q} - X_{0}^{\lambda}Y^{w}\right)
\quad {\rm by \quad [3.1, \mbox{\cite{seng1}}]}\\
R_{10} & = & \left(-X_{0}^{\lambda + \mu-1}X_{2}^{q-q'}, 0, -X_{1}Y^{v-w}, X_{2}^{q-q'+1}, 0, -X_{0}^{\lambda-1}X_{1}^{2}\right)
\quad {\rm by \quad [8.6, \mbox{\cite{seng1}}]}\\
R_{11} & = & \left(0, 0, -Y^{v-w}, 0, X_{2}^{q-q'}, -X_{0}^{\lambda-1}X_{1}\right)
\quad {\rm by \quad [8.1, \mbox{\cite{seng1}}]}\\
R_{12} & = & \left(0, 0, X_{0}^{\mu}X_{2}^{q-q'} - Y^{v}, 0, 0, -X_{0}^{\lambda -1}X_{1}Y^{w} + X_{2}^{q+1}\right)
\quad {\rm by \quad [3.1, \mbox{\cite{seng1}}]}\\
R_{13} & = & \left(X_{0}^{\lambda + \mu-1}, 0, 0, -X_{2}, X_{1}, 0\right)\quad {\rm by \quad [4.5, \mbox{\cite{seng1}}]}\\
R_{14} & = & \left(0, X_{0}^{\mu}, 0, -Y^{w}, 0, X_{1}X_{2}^{q'}\right)\quad {\rm by \quad [9.2, \mbox{\cite{seng1}}]}\\
R_{15} & = & \left(0, 0, X_{0}^{\mu}, 0, -Y^{w}, X_{2}^{q'+1}\right)\quad {\rm by \quad [9.2, \mbox{\cite{seng1}}]}\\
\end{eqnarray*}
\noindent We observe that 
\begin{eqnarray*}
R_{2} & = & X_{2} \cdot R_{1} + X_{1} \cdot R_{6}\\
R_{4} & = & X_{1} \cdot R_{13} + X_{2} \cdot R_{3}\\
R_{8} & = & X_{2}^{q-q'-1} \cdot R_{13} + R_{7}\\
R_{9} & = & Y^{w}\cdot R_{7} + X_{2}^{q-q'}\cdot R_{14}\\
R_{10} & = & -X_{2}^{q-q'}\cdot R_{13} + X_{1}\cdot R_{11}\\
R_{12} & = & Y^{w}\cdot R_{11} + X_{2}^{q-q'}\cdot R_{15}
\end{eqnarray*}

\noindent After removing $R_{2}$, $R_{4}$, $R_{8}$, $R_{9}$, $R_{10}$, $R_{12}$ from the list we get our 1st syzygy matrix 
\begin{eqnarray*}
B & = & \left[^{t}R_{1}, ^{t}R_{3}, ^{t}R_{5}, ^{t}R_{6}, ^{t}R_{7}, ^{t}R_{11}, ^{t}R_{13}, ^{t}R_{14}, ^{t}R_{15}\right]\\
{} & = & {\scriptsize \left [\begin{matrix}
-X_{2}^{q} & -X_{2}^{q'}Y^{v-w} & X_{0}^{\mu}X_{2}^{q-q'}-Y^{v} & Y^{w}X_{0}^{\lambda -1} & 0 & 0 & X_{0}^{\lambda+\mu -1} & 0 & 0 \\
X_{1} & 0 & 0 & -X_{2} & -Y^{v-w} & 0 & 0 & X_{0}^{\mu} & 0 \\
-X_{0} & 0 & 0 & X_{1} & 0 & -Y^{v-w} & 0 & 0 & X_{0}^{\mu} \\
0 & X_{1} & 0 & 0 & X_{2}^{q-q'} & 0 & -X_{2} & -Y^{w} & 0 \\
0 & -X_{0} & 0 & 0 & 0 & X_{2}^{q-q'} & X_{1} & 0 & -Y^{w}\\
0 & 0 & X_{1}^{2}-X_{0}X_{2} & 0 & -X_{0}^{\lambda} & -X_{1}X_{0}^{\lambda -1} & 0 & X_{1}X_{2}^{q'} & X_{2}^{q'+1}
\end{matrix}
\right]}
\end{eqnarray*}

\noindent We now determine the second  syzygy, which is the kernel of the map given by the matrix $B$. 
Let $[f_{1},f_{2},f_{3},f_{4},f_{5},f_{6},f_{7},f_{8},f_{9}]\in\mathbb{R}^{9}$ denote an element 
in the second syzygy, that is, 
$${\scriptsize \left [\begin{matrix}
-X_{2}^{q} & -X_{2}^{q'}Y^{v-w} & X_{0}^{\mu}X_{2}^{q-q'}-Y^{v} & Y^{w}X_{0}^{\lambda -1} & 0 & 0 & X_{0}^{\nu -1} & 0 & 0 \\
X_{1} & 0 & 0 & -X_{2} & -Y^{v-w} & 0 & 0 & X_{0}^{\mu} & 0 \\
-X_{0} & 0 & 0 & X_{1} & 0 & -Y^{v-w} & 0 & 0 & X_{0}^{\mu} \\
0 & X_{1} & 0 & 0 & X_{2}^{q-q'} & 0 & -X_{2} & -Y^{w} & 0 \\
0 & -X_{0} & 0 & 0 & 0 & X_{2}^{q-q'} & X_{1} & 0 & -Y^{w}\\
0 & 0 & X_{1}^{2}-X_{0}X_{2} & 0 & -X_{0}^{\lambda} & -X_{1}X_{0}^{\lambda -1} & 0 & X_{1}X_{2}^{q'} & X_{2}^{q'+1}
\end{matrix}\right]\left[\begin{matrix}
f_{1}\\
f_{2}\\
f_{3}\\
f_{4}\\
f_{5}\\
f_{6}\\
f_{7}\\
f_{8}\\
f_{9}\\
\end{matrix}\right] = 0}$$
Multiplying the column vector with 2nd and 3rd row we get 
\begin{equation}\label{eq:1}
X_{1}f_{1} - X_{2}f_{4} - Y^{v-w}f_{5} + X_{0}^{\mu}f_{8} =  0
\end{equation}

\begin{equation}\label{eq:2} 
X_{0}f_{1} + X_{1}f_{4} - Y^{v-w}f_{6} + X_{0}^{\mu}f_{9} = 0
\end{equation}
\medskip

\noindent Eliminating $f_{1}$ from the above equations we obtain 
\begin{equation}\label{eq:3}
(X_{1}^{2}-X_{0}X_{2})f_{4}=Y^{v-w}(X_{0}f_{5}+X_{1}f_{6})-X_{0}^{\mu}(X_{1}f_{9}+X_{0}f_{8})
\end{equation}
\medskip

\noindent which implies that $(X_{1}^{2}-X_{0}X_{2})f_{4}\in \langle Y^{v-w}, X_{0}^{\mu} \rangle$. 
Therefore, $f_{4}\in \langle Y^{v-w}, X_{0}^{\mu}\rangle$, since 
$Y^{v-w}, X_{0}^{\mu}, X_{1}^{2} - X_{0}X_{2}$ form a regular sequence. Hence, 

$$f_{4}= Y^{v-w}p_{1} + X_{0}^{\mu}p_{2}$$
\medskip

\noindent where $p_{1}$ and $p_{2}$ are polynomials in $R$. Now, eliminating $f_{4}$ from $(4.1)$ and $(4.2)$ we get

\begin{equation}\label{eq:4}
(X_{1}^{2}-X_{0}X_{2})f_{1} = Y^{v-w}(X_{1}f_{5} + X_{2}f_{6}) - X_{0}^{\mu}(X_{1}f_{8} + X_{2}f_{9})
\end{equation}
\medskip

\noindent and a similar argument as above shows that $f_{1}\in \langle Y^{v-w}, X_{0}^{\mu}\rangle$. 
Therefore, we may write

$$f_{1}=Y^{v-w}p_{3}+X_{0}^{\mu}p_{4}.$$

\noindent Putting $f_{4}$ in $(4.3)$ and $f_{1}$ in $(4.4)$ we get

\begin{equation}\label{eq:5}
X_{0}f_{5} + X_{1}f_{6} = (X_{1}^{2} - X_{0}X_{2})p_{1} + X_{0}^{\mu}p_{5}
\end{equation}
\begin{equation}\label{eq:6}
X_{1}f_{5} + X_{2}f_{6} = (X_{1}^{2} - X_{0}X_{2})p_{3} + X_{0}^{\mu}p_{6}
\end{equation}
\begin{equation}\label{eq:7}
X_{0}f_{8} + X_{0}f_{9} = -(X_{1}^{2} - X_{0}X_{2})p_{2} + y^{v-w}p_{5}
\end{equation}
\begin{equation}\label{eq:8}
X_{1}f_{8} + X_{2}f_{9} = -(X_{1}^{2} - X_{0}X_{2})p_{4} + Y^{v-w}p_{6}
\end{equation}    
\medskip

\noindent where the $p_{i}$'s are polynomials in $R$. Eliminating $f_{6}$ from $(4.5)$ and 
$(4.6)$ we get, 
$$(X_{1}^2 - X_{0}X_{2})(f_{5} + X_{2}p_{1} - X_{1}p_{3}) = X_{0}^{\mu}(X_{1}p_{6} - X_{2}p_{5}).$$
Proceeding as before we get
$$f_{5} = X_{1}p_{3} - X_{2}p_{1} + X_{0}^{\mu}p_{7}$$
and 
\begin{equation}\label{eq:9}
(X_{1}^{2} - X_{0}X_{2})p_{7} = -p_{5}X_{2} + p_{6}X_{1}
\end{equation}

\noindent Similarly, eliminating $f_{5}$ from $(4.5)$ and $(4.6)$ we get, 
$$(X_{1}^2 - X_{0}X_{2})(f_{6} - X_{1}p_{1} + X_{0}p_{3}) = X_{0}^{\mu}(-X_{0}p_{6} + X_{1}p_{5})$$
Proceeding as before we get
$$f_{6} = X_{1}p_{1} - X_{0}p_{3} + X_{0}^{\mu}p_{8}$$

\begin{equation}\label{eq:10}
(X_{1}^{2}-X_{0}x_{2})p_{8} = -p_{5}X_{1} - p_{6}X_{0}
\end{equation}
\medskip

\noindent Similarly, from $(4.7)$ and $(4.8)$ we get,
$$f_{8}=-X_{1}p_{4} + X_{2}p_{2} + y^{v-w}p_{9}$$
$$f_{9}=-X_{1}p_{2} + X_{0}p_{4} + Y^{v-w}p_{10}$$
$$(X_{1}^{2} - X_{0}X_{2})p_{9} = -p_{5}X_{2} - p_{6}X_{1}$$
$$(X_{1}^{2} - X_{0}X_{2})p_{10} = -p_{5}X_{1} - p_{6}X_{0}$$

\noindent Hence, it follows that 
$$ p_{5}= X_{1}p_{8} + X_{0}p_{9}$$
$$p_{6}= X_{1}p_{9} + X_{2}p_{8}$$

\noindent Multiplying the fourth row and the fifth row of the matrix $B$ with the column 
vector and proceeding exactly as before we obtain
$$f_{7}=-Y^{w}p_{2}-X_{2}^{q-q'}p_{1}+ p_{11}(Y^{v}-X_{2}^{q-q'}p_{1})$$
$$f_{2}=-Y^{w}p_{4}-X_{2}^{q-q'}p_{3}+ p_{12}(Y^{v}-X_{2}^{q-q'}p_{3})$$
$$p_{5}=(X_{1}^{2}-X_{0}X_{2})p_{11}$$
$$p_{6}=(X_{1}^{2}-X_{0}X_{2})p_{12}$$

\noindent These imply that
$$ p_{8}= X_{1}p_{11}- X_{0}p_{12}$$
$$ p_{9}= X_{1}p_{12}- X_{2}p_{11}$$
$$ p_{10}= X_{1}p_{11}- X_{0}p_{12}$$
$$ p_{7}= X_{1}p_{12}- X_{2}p_{11}$$

\noindent Multiplying the sixth row of the matrix $B$ with the column vector and plugging these values 
in the equation obtained we get
$$f_{3} = -X_{2}^{q'}Y^{v-w}p_{12} + X_{0}^{\nu-1}p_{11} + X_{0}^{\lambda-1}p_{1} + X_{2}^{q'}p_{4}.$$

\noindent Therefore, the matrix obtained by putting $f_{i}$ in the $i$-th row is the second syzygy matrix, which is the following:
$$ {\scriptsize
\left[
\begin{array}{cccccc}
0 & 0 & Y^{v-w} & X_{0}^{\mu} & 0 & 0\\
0 & 0 & -X_{2}^{q-q'} & -Y^{w} & 0 & Y^{v} - X_{2}^{q-q'}X_{0}^{\mu} \\
X_{0}^{\lambda-1} & 0 & 0 & X_{2}^{q'} & X_{0}^{\nu-1} & -X_{2}^{q'}Y^{v-w}\\
Y^{v-w} & X_{0}^{\mu}& 0 & 0 & 0 & 0\\
-X_{2} & 0 & X_{1} & 0 & -X_{0}^{\mu}X_{2} & X_{0}^{\mu}X_{1}\\
X_{1} & 0 & -X_{0} & 0 & X_{0}^{\mu}X_{1}& - X_{0}^{\mu +1}\\
-X_{2}^{q-q'} & -Y^{w} & 0 & 0 & Y^{v} - X_{2}^{q-q'}X_{0}^{\mu} & 0\\
0 & X_{2} & 0 & -X_{1} & -Y^{v-w}X_{2} & Y^{v-w}X_{1}\\
0 & -X_{1} & 0 & X_{0} & -Y^{v-w}X_{1} & -Y^{v-w}X_{0}
\end{array}
\right]
}
$$
Denoting the columns by $L_{i}$ from the left we see that 
$$L_{6} = -Y^{v-w}L_{4} +X_{0}^{\mu}L_{3}$$
$$L_{5} = X_{0}^{\mu}L_{1} + Y^{v-w}L_{2}.$$
Therefore, a reduced form of the second syzygy matrix is
$${\scriptsize
C = \left[
\begin{matrix}
0 & 0 & Y^{v-w} & X_{0}^{\mu}\\
0 & 0 & -X_{2}^{q-q'} & -Y^{w}\\
X_{0}^{\lambda-1} & 0 & 0 & X_{2}^{q'}\\
Y^{v-w} & X_{0}^{\mu}& 0 & 0\\
-X_{2} & 0 & X_{1} & 0\\
X_{1} & 0 & -X_{0} & 0\\
-X_{2}^{q-q'}& -Y^{w} & 0 & 0\\
0 & X_{2} & 0 & -X_{1}\\
0 & -X_{1} & 0 & X_{0}
\end{matrix}
\right]
}
$$
It is easy to see that the third syzygy matrix will be the zero matrix. Therefore, 
$$0\longrightarrow R^{4}\stackrel{\theta{_{C}}}{\longrightarrow} R^{9}\stackrel{\theta{_{B}}}{\longrightarrow} R^{6}\stackrel{\theta{_{A}}}{\longrightarrow} R\longrightarrow k[\Gamma]\longrightarrow0$$ is a free resolution of the binomial ideal in question, where $\theta_{A}$, 
$\theta_{B}$, $\theta_{C}$ are the maps given by the matrices $A$, $B$ and $C$ respectively. This resolution is minimal if 
$\mu\neq 0$, $\lambda\neq 1$, $q'\neq 0$, since the entries of the matrices are from the maximal ideal $(X_{0}, X_{1}, X_{2}, Y)$ and 
the total Betti numbers are $[6,9,4]$. We now consider the following cases for which our resolution fails to be minimal and we 
use Corollary \ref{homo2} to extract a minimal one from this.  

\begin{itemize}
\item[(a)] $\mu=0$;
\smallskip

\item[(b)] $\mu\neq 0$,$\lambda=1$;
\smallskip

\item[(c)] $\mu\neq 0$,$\lambda\neq 1$,$q'= 0$;
\end{itemize}
\smallskip
\noindent We notice that in this case $\lambda=1, \mu=0$ is not possible due to \quad {\rm by \quad [2.2, \mbox{\cite{seng1}}]}
\bigskip

\noindent\textbf{Case (a): $\mu=0$}
\smallskip

\noindent We take ${\scriptsize P_{2}=E_{85}(Y^{v-w})E_{84}(X_{2})E_{81}(-X_{1})E_{91}(X_{0})E_{94}(X_{1})E_{96}(Y^{v-w})}$ and 
$P_{3}=E_{42}(Y^{w})E_{62}(-X_{1}X_{2}^{q'})E_{53}(Y^{w})E_{63}(-X_{2}^{q'+1})$. 
The syzygy matrices under this new transformation become 
$$A = {\scriptsize [\xi_{11}, 0, 0, \psi_{0}, \psi_{1}, \theta]},$$

$$B = {\scriptsize \left[\begin{matrix}
0 & -X_{2}^{q'}Y^{v-w} & X_{2}^{q-q'}-Y^{v} & 0 & 0 & 0 & X_{0}^{\lambda-1} & 0 & 0\\
0 & 0 & 0 & 0 & 0 & 0 & 0 & 1 & 0\\
0 & 0 & 0 & 0 & 0 & 0 & 0 & 0 & 1\\
0 & X_{1} & 0 & 0 & Y^{v}-X_{2}^{q-q'} & 0 & -X_{2} & 0 & 0\\
0 & -X_{0} & 0 & 0 & 0 & X_{2}^{q-q'}-Y^{v} & X_{1} & 0 & 0\\
0 & 0 & X_{1}^{2}-X_{2}X_{0} & 0 & X_{1}X_{2}^{q'}Y^{v-w}-X_{0}^{\lambda} & X_{2}^{q'+1}Y^{v-w}-X_{1}X_{0}^{\lambda-1} & 0 & 0 & 0\\
\end{matrix}
\right]},$$

$$C = {\scriptsize\left[\begin{matrix}
0 & 0 & 0 & 1\\
0 & 0 & Y^{v}-X_{2}^{q-q'} & 0\\
X_{0}^{\lambda-1} & 0 & -X_{2}^{q'}Y^{v-w} & 0\\
0 & 1 & 0 & 0\\
-X_{2} & 0 & X_{1} & 0\\
X_{1} & 0 & -X_{0} & 0\\
Y^{v}-X_{2}^{q-q'}& 0 & 0 & 0\\
0 & 0 & 0 & 0\\
0 & 0 & 0 & 0
\end{matrix}\right]}.$$

Finally, applying Lemma \ref{hom3} we get that 
$$A = {\scriptsize[\xi_{11},\psi_{0},\psi_{1},\theta]},$$ 
$$B = {\scriptsize\left[\begin{matrix}
-X_{2}^{q'}Y^{v-w} & X_{2}^{q-q'}-Y^{v} & 0 & 0 & X_{0}^{\lambda-1}\\
X_{1} & 0 & Y^{v}-X_{2}^{q-q'} & 0 & -X_{2}\\
-X_{0} & 0 & 0 & X_{2}^{q-q'}-Y^{v} & X_{1}\\
0 & X_{1}^{2}-X_{2}X_{0} & X_{1}X_{2}^{q'}Y^{v-w}-X_{0}^{\lambda} & X_{2}^{q'+1}Y^{v-w}-X_{1}X_{0}^{\lambda-1} & 0\\
\end{matrix}\right]},$$
$$C = {\scriptsize \left[\begin{matrix}
0 & Y^{v}-X_{2}^{q-q'}\\
X_{0}^{\lambda-1} & -X_{2}^{q'}Y^{v-w}\\
-X_{2} & X_{1}\\
X_{1} & -X_{0}\\
Y^{v}-X_{2}^{q-q'} & 0\\
\end{matrix}\right]}.$$
Therefore, the total Betti numbers in this case are [4,5,2].
\bigskip

\noindent\textbf{Case (b): $\mu\neq 0$, $\lambda=1$} 
\smallskip

\noindent We take $P_{2}=E_{43}(Y^{v-w})E_{53}(-X_{2})E_{63}(X_{1})E_{73}(-X_{2}^{q-q'})$ and 
$P_{1}=E_{14}(-X_{2}^{q'})$. Finally, applying Corollary \ref{homo2} we get the syzygy matrices as 

$$A = {\scriptsize [\xi_{11},\phi_{0}, \phi_{1}, \psi_{0}, \psi_{1},\theta ]},$$

$$B={\scriptsize\left[\begin{matrix}
-X_{2}^{q} & -X_{2}^{q'}Y^{v-w} & Y^{w} & 0 & 0 & X_{0}^{\mu} & 0 & 0\\
X_{1} & 0 & -X_{2} & -Y^{v-w} & 0 & 0 & X_{0}^{\mu} & 0\\
-X_{0} & 0 & X_{1} & 0 & -Y^{v-w} & 0 & 0 & X_{0}^{\mu}\\
0 & X_{1} & 0 & X_{2}^{q-q'} & 0 & -X_{2} & -Y^{w} & 0\\
0 & -X_{0} & 0 & 0 & X_{2}^{q-q'} & X_{1} & 0 & -Y^{w}\\
0 & 0 & 0 & -X_{0} & -X_{1} & 0 & X_{1}X_{2}^{q'} & X_{2}^{q'+1}
\end{matrix}\right]},$$

$$C= {\scriptsize\left[\begin{matrix}
0 & Y^{v-w} & X_{0}^{\mu}\\
0 & -X_{2}^{q-q'} & -Y^{w}\\
X_{0}^{\mu}& 0 &-X_{2}^{q'}Y^{v-w}\\
0 & X_{1} & X_{2}^{q'+1}\\
0 &-X_{0} & -X_{1}X_{2}^{q'}\\
-Y^{w} & 0 & X_{2}^{q}\\
X_{2} & 0 & -X_{1}\\
-X_{1} & 0 & X_{0}\\
\end{matrix}
\right]}.$$
The total Betti numbers are $[6,8,3]$.
\bigskip

\noindent\textbf{Case (c): $\mu\neq 0$, $\lambda\neq 1$, $q'= 0$}
\smallskip

\noindent Finally applying Corollary \ref{homo2} with $P_{2}=E_{23}(-Y^{w})E_{13}(X_{0}^{\mu})E_{83}(-X_{1})E_{93}(X_{0})$ 
and $P_{1}=E_{41}(-X_{0}^{\lambda-1})$.  we get the syzygy matrices as

$$A=[{\scriptsize \xi_{11},\phi_{0},\phi_{1},\psi_{0},\psi_{1},\theta }],$$

$$B= {\scriptsize\left[\begin{matrix}
-X_{2}^{q} & -Y^{v-w} & Y^{w}X_{0}^{\lambda -1} & 0 & 0 & X_{0}^{\nu -1} & 0 & 0 \\
X_{1} & 0 & -X_{2} & -Y^{v-w} & 0 & 0 & X_{0}^{\mu} & 0 \\
-X_{0}& 0  & X_{1} & 0 & -Y^{v-w} & 0 & 0 & X_{0}^{\mu} \\
0 & X_{1} & 0 & X_{2}^{q} & 0 & -X_{2} & -Y^{w} & 0 \\
0 & -X_{0} & 0 & 0 & X_{2}^{q} & X_{1} & 0 & -Y^{w}\\
0 & 0 & 0 & -X_{0}^{\lambda}& -X_{1}X_{0}^{\lambda -1} & 0 & X_{1} & X_{2}
\end{matrix}\right]}$$

$$C = {\scriptsize\left[\begin{matrix}
-X_{0}^{\nu-1} & 0 & Y^{v-w}\\
X_{0}^{\lambda-1}Y^{w} & 0 & -X_{2}^{q}\\
Y^{v-w} & X_{0}^{\mu} & 0\\
-X_{2} & 0 & X_{1}\\
X_{1} & 0 & -X_{0}\\
-X_{2}^{q} & -Y^{w} & 0\\
X_{1}X_{0}^{\lambda-1} & X_{2} & 0\\
-X_{0}^{\lambda} & -X_{1} & 0\\
\end{matrix}\right]}$$
The total Betti numbers are $[6,8,3]$. 



\section{\textbf{$W\neq\emptyset$; $r=2$, $r^{'} = 1$}}                  

\noindent In this case, $\nu = \lambda + \mu +1$. Let 
$\mathcal{G} = \{\xi_{11}, \phi_{0}, \phi_{1}, \psi_{0}, \psi_{1}, \theta\}$, such that                         
\begin{eqnarray*}
\xi_{11} & = & X_{1}^{2} - X_{0}X_{2}\\
\phi_{0} & = & X_{2}^{q+1} - X_{0}^{\lambda}Y^{w}\\
\psi_{0} & = & X_{1}X_{2}^{q'}Y^{v-w} - X_{0}^{\lambda + \mu+1}\\
\psi_{1} & = & X_{2}^{q'+1}Y^{v-w} - X_{0}^{\lambda + \mu }X_{1}\\
\theta & = & Y^{v} - X_{0}^{\mu}X_{1}X_{2}^{q-q'}\\
\end{eqnarray*}
\noindent Let $A=[\xi_{11}, \phi_{0},\psi_{0},\psi_{1}, \theta ]$, which is the 0th syzygy matrix. 
We know that $\mathcal{G}$ is a Gr\"{o}bner Basis with respect to the graded reverse lexicographic 
order from the work done in \cite{seng1}. We indicate below the exact results of \cite{seng1}, which 
have been used together with Theorem \ref{schreyer} for computing the generators for the first syzygy module. 

\begin{eqnarray*}
R_{1} & = &\left(-X_{2}^{q+1}+X_{0}^{\lambda}Y^{w}, X_{1}^{2}-X_{0}X_{2}, 0, 0, 0 \right) \quad {\rm by \quad [3.1, \mbox{\cite{seng1}}]}\\
R_{2} & = &\left(-X_{2}^{q'}Y^{v-w}, 0, X_{1}, -X_{0}, 0\right) \quad {\rm by \quad [7.2, \mbox{\cite{seng1}}]}\\
R_{3} & = &\left(X_{0}^{\lambda + \mu }X_{1}-X_{2}^{q'+1}Y^{v-w}, 0, 0, X_{1}^2-X_{0}X_{2}, 0 \right) \quad {\rm by \quad [3.1, \mbox{\cite{seng1}}]}\\
R_{4} & = &\left(X_{0}^{\mu}X_{1}X_{2}^{q-q'} - Y^{v}, 0, 0, 0, X_{1}^{2}-X_{0}X_{2} \right) \quad {\rm by \quad [3.1, \mbox{\cite{seng1}}]}\\
R_{5} & = & \left(-X_{0}^{\lambda + \mu}X_{2}^{q-q'}, -X_{1}Y^{v-w}, X_{2}^{q-q'+1}, 0, -X_{0}^{\lambda}X_{1} \right) \quad {\rm by \quad [8.6, \mbox{\cite{seng1}}]}\\
R_{6} & = &\left(0, -Y^{v-w}, 0, X_{2}^{q-q'}, -X_{0}^{\lambda} \right) \quad {\rm by \quad [8.1, \mbox{\cite{seng1}}]}\\
R_{7} & = &\left(0, X_{0}^{\mu}X_{1}X_{2}^{q-q'}-Y^{v}, 0, 0, X_{2}^{q+1}-X_{0}^{\lambda}Y^{w}\right) \quad {\rm by \quad [3.1, \mbox{\cite{seng1}}]}\\
R_{8} & = &\left(X_{0}^{\lambda + \mu}, 0, -X_{2}, X_{1}, 0 \right) \quad {\rm by \quad [4.5, \mbox{\cite{seng1}}]}\\
R_{9} & = &\left(X_{0}^{\mu}X_{2}^{q}, X_{0}^{\mu+1}, -Y^{w}, 0, X_{1}X_{2}^{q'}\right) \quad {\rm by \quad [9.1, \mbox{\cite{seng1}}]}\\
R_{10} & = &\left(0, X_{0}^{\mu}X_{1}, 0, -Y^{w}, X_{2}^{q'+1}\right) \quad {\rm by \quad [9.1, \mbox{\cite{seng1}}]}\\
\end{eqnarray*}

\noindent We observe that 
\begin{eqnarray*}
R_{3} & = & X_{2} \cdot R_{2} + X_{1}\cdot R_{8} \\
R_{5} & = & X_{1}\cdot R_{6}  - X_{2}^{q-q'}\cdot R_{8}\\
R_{7} & = & Y^{w} \cdot R_{6} + X_{2}^{q-q'}\cdot R_{10}\\
\end{eqnarray*}

\noindent We remove $R_{3}$, $R_{7}$, $R_{5}$ from the list and the first syzygy matrix is given by the matrix
$$B= {\scriptsize\left[\begin{matrix}
-X_{2}^{q+1} + X_{0}^{\lambda}Y^{w} & -X_{2}^{q'}Y^{v-w} & X_{0}^{\mu}X_{1}X_{2}^{q-q'}-Y^{v} & 0 & X_{0}^{\lambda + \mu } & 
X_{0}^{\mu}X_{2}^{q} & 0\\
X_{1}^{2}-X_{0}X_{2} & 0 & 0 & -Y^{v-w} & 0 & X_{0}^{\mu+1} & X_{0}^{\mu}X_{1}\\
0 & X_{1} & 0 & 0 & -X_{2} & -Y^w & 0\\
0 & -X_{0} & 0 & X_{2}^{q-q'} & X_{1} & 0 &-Y^w\\
0 & 0 & X_{1}^2-X_{0}X_{2} & -X_{0}^{\lambda} & 0 & X_{1}X_{2}^{q'} & X_{2}^{q'+1}
\end{matrix}
\right]}$$
In order to determine the second syzygy matrix, we consider the kernel of the map given by the matrix $B$. 
Let $[f_{1},f_{2},f_{3},f_{4},f_{5},f_{6},f_{7}]$ be an element of the kernel of $B$. Then
$${\scriptsize\left[\begin{matrix}
-X_{2}^{q+1} + X_{0}^{\lambda}Y^{w} & -X_{2}^{q'}Y^{v-w} & X_{0}^{\mu}X_{1}X_{2}^{q-q'}-Y^{v} & 0 & X_{0}^{\lambda + \mu } 
& X_{0}^{\mu}X_{2}^{q} & 0\\
X_{1}^{2} - X_{0}X_{2} & 0 & 0 & -Y^{v-w} & 0 & X_{0}^{\mu+1} & X_{0}^{\mu}X_{1}\\
0 & X_{1} & 0 & 0 & -X_{2} & -Y^{w} & 0\\
0 & -X_{0} & 0 & X_{2}^{q-q'} & X_{1} & 0 &-Y^{w}\\
0 & 0 & X_{1}^2-X_{0}X_{2} & -X_{0}^{\lambda} & 0 & X_{1}X_{2}^{q'} & X_{2}^{q'+1}
\end{matrix}\right]
\left[\begin{matrix}
f_{1}\\
f_{2}\\
f_{3}\\
f_{4}\\
f_{5}\\
f_{6}\\
f_{7}
\end{matrix}\right]=0}$$

\noindent Multiplication of 3rd row with the column vector gives 

\begin{equation}\label{eq:1}
X_{1}f_{2}-X_{2}f_{5}-Y^{w}f_{6}=0 
\end{equation}
\smallskip

\noindent This implies that $f_{6}\in \langle X_{1},X_{2} \rangle $, since the polynomials 
$X_{1}, X_{2}, Y^{w}$ form a regular sequence. 
We may write $f_{6}=X_{1}p_{1}+X_{2}p_{2}$, and therefore 
$$X_{1}(f_{2}-Y^{w}p_{1})= X_{2}(f_{5}+Y^{w}p_{2}).$$
\noindent This shows that $X_{1}\mid (f_{5}+Y^{w}p_{2})$. Taking the quotient as $p_{3}$ we get,
$$f_{2}=X_{2}p_{3}+p_{1}Y^{w},$$
$$f_{5}=X_{1}p_{3}-p_{2}Y^{w},$$
where $p_{i}\in R$. 
\medskip

Multiplication of 4th row with the column vector gives 
\begin{equation}\label{eq:2}
-X_{0}f_{2}+X_{2}^{q-q'}f_{4}+X_{1}f_{5}-Y^{w}f_{7}=0.
\end{equation}
\smallskip

\noindent Plugging in values of $f_{2}$ and $f_{5}$ in the above equation we get, 
$$X_{2}^{q-q'}f_{4}-Y^{w}(f_{7}+X_{0}p_{1}+X_{1}p_{2})=-(X_{1}^2-X_{0}X_{2})p_{3}.$$
We see that $Y^{w}\mid [X_{2}^{q-q'}f_{4}+(X_{1}^2-X_{0}X_{2})p_{3}]$. We may therefore write 
\begin{equation}\label{eq:3}
X_{2}^{q-q'}f_{4}= -(X_{1}^{2} - X_{0}X_{2})p_{3} + Y^{w}p_{4}
\end{equation}
$$f_{7}=-X_{0}p_{1}-X_{1}p_{2}+p_{4}.$$

Multiplication of 6th row with the column vector gives 
\begin{equation}\label{eq:4}
(X_{1}^{2}-X_{0}X_{2})f_{3}-X_{0}^{\lambda}f_{4}+X_{1}X_{2}^{q'}f_{6}+X_{2}^{q'+1}f_{7}=0
\end{equation}
\smallskip

\noindent Multiplying $(5.4)$ by $X_{2}^{q-q'}$ and plugging in the value of $X_{2}^{q-q'}f_{4}$ obtained from $(5.3)$, we get 
\begin{equation}\label{eq:5}
(X_{1}^2-X_{0}X_{2})(X_{2}^{q-q'}f_{3}+X_{0}^{\lambda}p_{3}+p_{1}X_{2}^{q})=(X_{2}^{q'+1}-X_{0}^{\lambda}Y^{w})X_{2}^{q-q'}p_{4}.
\end{equation} 
\smallskip

\noindent Therefore, $(X_{1}^2-X_{0}X_{2})\mid p_{4}$. Let $p_{4}=(X_{1}^2-X_{0}X_{2})p_{5}$. 
Putting this value of $p_{4}$ in $(5.3)$ we get 
\begin{equation}\label{eq:6}
X_{2}^{q-q'}f_{4}=(X_{1}^2-X_{0}X_{2})(Y^wp_{5}-p_{3}).
\end{equation}
\smallskip

\noindent Therefore, $(X_{1}^2-X_{0}X_{2})\mid f_{4}$ and we may write $f_{4}=(X_{1}^2-X_{0}X_{2})p_{6}$. 
From $(5.6)$ we get 
$$p_{3}=Y^{w}p_{5}-X_{2}^{q-q'}p_{6}.$$
Plugging these values of $p_{3}$ and $p_{4}$ in $(5.5)$, we get 
\begin{equation}\label{eq:7} 
f_{3}=X_{0}^{\lambda}p_{6}-X_{2}^{q'}p_{1}-X_{2}^{q'+1}p_{5}.
\end{equation}
\smallskip

Multiplication of 2nd row with the column vector gives 
$$(X_{1}^{2} - X_{0}X_{2})f_{1} - Y^{v-w}f_{4} + X_{0}^{\mu+1}f_{6} + X_{0}^{\mu}X_{1}f_{7}=0.$$
Plugging in values of $f_{4}$, $f_{6}$ and $f_{7}$ in the above equation we get,
$$f_{1} = Y^{v-w}p_{6}+p_{2}X_{0}^{\mu}-X_{0}^{\mu}X_{1}p_{5}.$$
Therefore, 
$${\scriptsize\left[\begin{matrix}
0 & X_{0}^{\mu} & -X_{0}^{\mu}X_{1} & Y^{v-w}\\
Y^{w} & 0 & X_{2}Y^{w} & -X_{2}^{q-q'+1}\\
-X_{2}^{q'} & 0 & -X_{2}^{q'+1} & X_{0}^{\lambda}\\
0 & 0 & 0 & (X_{1}^{2}-X_{0}X_{2})\\
0 & -Y^{w} & X_{1}Y^{w} & -X_{1}X_{2}^{q-q'}\\
X_{1} & X_{2} & 0 & 0\\
-X_{0} & -X_{1} & X_{1}^{2}-X_{0}X_{2} & 0
\end{matrix}\right]}
{\scriptsize\left[\begin{matrix}
p_{1}\\p_{2}\\p_{5}\\p_{6}
\end{matrix}\right]}
{\scriptsize =\left[
\begin{array}{c}
f_{1}\\f_{2}\\f_{3}\\f_{4}\\f_{5}\\f_{6}\\f_{7}
\end{array}\right]}$$
Hence, the 2nd syzygy matrix is given by
$${\scriptsize \left[\begin{matrix}
0 & X_{0}^{\mu} & -X_{0}^{\mu}X_{1} & Y^{v-w}\\
Y^{w} & 0 & X_{2}Y^{w} & -X_{2}^{q-q'+1}\\
-X_{2}^{q'} & 0 & -X_{2}^{q'+1} & X_{0}^{\lambda}\\
0 & 0 & 0 & (X_{1}^2-X_{0}X_{2})\\
0 & -Y^{w} & X_{1}Y^{w} & -X_{1}X_{2}^{q-q'}\\
X_{1} & X_{2} & 0 & 0\\
-X_{0} & -X_{1} & X_{1}^2-X_{0}X_{2} & 0
\end{matrix}\right]}$$

\noindent We observe that $L_{3}= -X_{1}L_{2} + X_{2}L_{1}$, where $L_{i}$ denotes the ith column of the above matrix. 
Removing the third column we get the reduced form of the second syzygy matrix
$${\scriptsize C=\left[
\begin{matrix}
0 & X_{0}^{\mu} & Y^{v-w}\\
Y^{w} & 0 & -X_{2}^{q-q'+1}\\
-X_{2}^{q'} & 0 & X_{0}^{\lambda}\\
0 & 0 & X_{1}^2-X_{0}X_{2}\\
0 & -Y^{w} & -X_{1}X_{2}^{q-q'}\\
X_{1} & X_{2} & 0\\
-X_{0} & -X_{1} & 0
\end{matrix}
\right]}$$
Now we calculate the third syzygy matrix. Let $[g_{1},g_{2},g_{3}]$ be an element in the kernel of $C$. This means that
$${\scriptsize \left[\begin{matrix}
0 & X_{0}^{\mu} & Y^{v-w}\\
Y^{w} & 0 & -X_{2}^{q-q'+1}\\
-X_{2}^{q'} & 0 & X_{0}^{\lambda}\\
0 & 0 & X_{1}^2-X_{0}X_{2}\\
0 & -Y^{w} & -X_{1}X_{2}^{q-q'}\\
X_{1} & X_{2} & 0\\
-X_{0} & -X_{1} & 0
\end{matrix}
\right]
\left[
\begin{matrix}
g_{1}\\ g_{2}\\ g_{3}
\end{matrix}\right]}=0.$$
Multiplication of the 4th row with the column vector gives $(X_{1}^2-X_{0}X_{2})g_{3}=0$, and 
therefore $g_{3}=0$. Similarly, multiplication of the 6th and 7th row with the column vector gives 
$X_{1}g_{1}-X_{2}g_{2}=0$ and $-X_{2}g_{1}+X_{1}g_{2}=0$, implying that $g_{1}=g_{2}=0$. Therefore, 
the map given by the matrix $C$ is injective. Hence, we get that 
$$0\longrightarrow R^{3}\stackrel{^\theta{_{C}}}{\longrightarrow} R^{7}\stackrel{^\theta{_{B}}}{\longrightarrow} R^{5}\stackrel{^\theta{_{A}}}{\longrightarrow} R\longrightarrow k[\Gamma]\longrightarrow0$$ 
is a free resolution of the binomial ideal in question, where, $\theta_{A},\theta_{B},\theta_{C},$ are 
the maps given by the matrices $A$, $B$ and $C$ respectively. This resolution is minimal if 
$\mu\neq 0$, $q\neq q'$, $q'\neq 0$, for, the entries of the syzygy matrices would then belong to 
the maximal ideal $(X_{0},X_{1},X_{2},Y)$ and the total Betti numbers would be $[5,7,3]$. 
We now consider the following cases for which our resolution fails to be minimal and we use Corollary \ref{homo2}
to extract a minimal one out of this. 

\begin{itemize}
\item[(a)] $\mu=0,q'\neq 0$
\smallskip
\item[(b)] $\mu=0,q'=0$
\smallskip
\item[(c)] $\mu\neq 0,q\neq q',q'=0$ 
\smallskip
\item[(d)] $\mu\neq 0,q=q',q'\neq 0$
\end{itemize}
\bigskip

\noindent\textbf{Case (a): $\mu=0,q'\neq 0$}
\smallskip

\noindent We apply Corollary \ref{homo2} by taking $P_{2}=E_{51}(-Y^{w})E_{61}(X_{2})E_{71}(-X_{1})$ 
and $P_{1}=E_{23}(-Y^{v-w})$. Finally, applying Lemma \ref{hom3} we get the syzygy matrices as 
$$ A = [{\scriptsize \xi_{11},\phi_{0},\psi_{0},\psi_{1},\theta }]$$
$${\scriptsize
B = \left[\begin{matrix}
-X_{2}^{q'}Y^{v-w} & X_{1}X_{2}^{q-q'}-Y^{v} & 0 & X_{0}^{\lambda} & X_{2}^{q} & 0\\
0 & 0 & -Y^{v-w} & 0 & X_{0} & X_{0}X_{1}\\
X_{1} & 0 & 0 & -X_{2} & -Y^{w} & 0\\
-X_{0} & 0 & X_{2}^{q-q'} & X_{1} & 0 &-Y^{w}\\
0 & X_{1}^{2}-X_{0}X_{2} & -X_{0}^{\lambda} & 0 & X_{1}X_{2}^{q'} & X_{2}^{q'+1}
\end{matrix}\right]}$$
$$C = {\scriptsize\left[
\begin{matrix}
Y^{w} & -X_{2}^{q-q'+1}\\
-X_{2}^{q'} & X_{0}^{\lambda}\\
0 & X_{1}^2-X_{0}X_{2}\\
0 & Y^{v}-X_{1}X_{2}^{q-q'}\\
X_{1} & -X_{2}Y^{v-w}\\
-X_{0} & X_{1}Y^{v-w}
\end{matrix}
\right]}$$

\noindent The total Betti numbers are $[5,6,2]$. 
\bigskip

\noindent\textbf{Case (b): $\mu=0$, $q'= 0$}
\smallskip

\noindent Here along with the preceding case the entry $(2,1)$ of the matrix C as obtained in Case (a) will be affected. 
Hence, we take  $P_{2}=E_{12}(-Y^w)E_{52}(-X_{1})E_{62}(X_{0})$ and $P_{1}=E_{12}(X_{0}^{\lambda})$ and apply Corollary \ref{homo2} to 
get the syzygy matrices as
$$A=[ {\scriptsize \xi_{11},\phi_{0},\psi_{0},\psi_{1},\theta }]$$
$$B {\scriptsize\left[
\begin{matrix}
-X_{2}^{q'}Y^{v-w} & 0 & X_{0}^{\lambda } & X_{2}^{q} & 0\\
0 & -Y^{v-w} & 0 & X_{0}^{1} & X_{0}X_{1}\\
X_{1} & 0 &-X_{2} &-Y^{w} & 0\\
-X_{0} & X_{2}^{q-q'} & X_{1} & 0 & -Y^{w}\\
0 & -X_{0}^{\lambda} & 0 & X_{1}X_{2}^{q'} & X_{2}^{q'+1}
\end{matrix}\right]}$$
$$C = {\scriptsize \left[
\begin{array}{c}
X_{0}^{\lambda}Y^{w}-X_{2}^{q-q'+1}\\
X_{1}^2-X_{0}X_{2}\\
Y^{v}-X_{1}X_{2}^{q-q'}\\
X_{0}^{\lambda}X_{1}-X_{2}Y^{v-w}\\
X_{1}Y^{v-w}-X_{0}^{\lambda+1}
\end{array}
\right]}$$

\noindent The total Betti numbers are $[5,5,1]$. 
\bigskip

\noindent\textbf{Case (c): $\mu\neq 0$, $q'\neq q$, $q'=0$}
\smallskip

\noindent We take $P_{2}= E_{23}(-Y^{w})E_{63}(-X_{1})E_{73}(X_{0})$ and $P_{1}=E_{13}(X_{0}^{\lambda})$. Finally, 
applying Corollary \ref{homo2} we get the syzygy matrices as 
$$A=[{\scriptsize \xi_{11},\phi_{0},\psi_{0},\psi_{1},\theta }]$$
$${\scriptsize B=\left[\begin{matrix}
-X_{2}^{q+1}+X_{0}^{\lambda}Y^{w} & -Y^{v-w} & 0 & X_{0}^{\lambda } & X_{2}^{q} & 0\\
X_{1}^2-X_{0}X_{2} & 0 & -Y^{v-w} & 0 & X_{0} & X_{0}X_{1}\\
0 & X_{1} & 0 &-X_{2} & -Y^{w} & 0\\
0 & -X_{0} & X_{2}^{q} & X_{1} & 0 &-Y^{w}\\
0 & 0 &-X_{0}^{\lambda} & 0 & X_{1} & X_{2}
\end{matrix}\right]}$$
$$C= {\scriptsize\left[\begin{matrix}
X_{0}^{\mu} & Y^{v-w}\\
0 & X_{0}^{\lambda}Y^w-X_{2}^{q+1}\\
0 & X_{1}^2-X_{0}X_{2}\\
-Y^{w} & -X_{1}X_{2}^{q}\\
X_{2} & X_{0}^{\lambda}X_{1}\\
-X_{1} & -X_{0}^{\lambda+1}
\end{matrix}\right]}$$

\noindent The total Betti numbers are [5,6,2]. 
\bigskip

\noindent\textbf{Case (d): $\mu\neq 0$, $q'=q $, $q'\neq0$}
\smallskip

\noindent We apply Corollary \ref{homo2} by taking $P_{3}=E_{54}(X_{0}^{\lambda})E_{24}(Y^{v-w})$ and 
$P_{2}=E_{42}(X_{0})E_{45}(-X_{1})E_{47}(Y^w)$. Finally applying Lemma \ref{hom3} we get the 
syzygy matrices as   
$$A=[{\scriptsize \xi_{11},\phi_{0},\psi_{0},\theta }]$$
$${\scriptsize B=\left[\begin{matrix}
-X_{2}^{q+1}+X_{0}^{\lambda}Y^{w} & -X_{2}^{q}Y^{v-w} & X_{0}^{\mu}X_{1}-Y^{v} & X_{0}^{\lambda + \mu } 
& X_{0}^{\mu}X_{2}^{q} & 0\\
X_{1}^2-X_{0}X_{2} & -X_{0}Y^{v-w} & 0 & X_{1}Y^{v-w} & X_{0}^{\mu+1} & X_{0}^{\mu}X_{1}-Y^{v}\\
0 & X_{1} & 0 & -X_{2} & -Y^w & 0\\
0 & -X_{0}^{\lambda+1} & X_{1}^2-X_{0}X_{2} & X_{1}X_{0}^{\lambda} & X_{1}X_{2}^{q} & X_{2}^{q+1}-X_{0}^{\lambda}Y^{w}
\end{matrix}\right]}$$
$${\scriptsize C=\left[\begin{matrix}
0 & X_{0}^{\mu} & Y^{v-w}\\
Y^{w} & 0 & -X_{2}\\
-X_{2}^{q} & 0 & X_{0}^{\lambda}\\
0 & -Y^{w} & -X_{1}\\
X_{1} & X_{2} & 0\\
-X_{0} & -X_{1} & 0
\end{matrix}\right]}$$

\noindent The total Betti numbers are $[4,6,3]$.



\section{\textbf{$W\neq\emptyset$; $r=r'= 2$}} 

\noindent In this case, because of $r=r'$, we have $\nu = \lambda+\mu $ and $q>q'$. 
Let $\mathcal{G}=\{\xi_{11},\phi_{0},\psi_{0},\theta\}$, such that 
\begin{eqnarray*}
\xi_{11} & = & X_{1}^2-X_{0}X_{2}\\
\phi_{0} & = & X_{2}^{q+1} - X_{0}^{\lambda}Y^{w}\\
\psi_{0} & = & X_{2}^{q'+1}Y^{v-w} - X_{0}^{\lambda + \mu}\\
\theta & = & Y^{v} - X_{2}^{q-q'}X_{0}^{\mu}\\
\end{eqnarray*}
\noindent Let $A=[\xi_{11}, \phi_{0},\psi_{0},\theta ]$ be the $0$-th syzygy matrix. We know that 
$\mathcal{G}$ is a Gr\"{o}bner Basis with respect to the graded reverse lexicographic order from 
the work done in \cite{seng1}. We indicate below the exact results of \cite{seng1}, which have been 
used together with \ref{schreyer} for computing the generators for the first syzygy module . 

\begin{eqnarray*}
R_{1} & = &\left(X_{0}^{\lambda}Y^{w}-X_{2}^{q+1}, X_{1}^{2}-X_{0}X_{2}, 0, 0\right) {\rm by \quad [3.1, \mbox{\cite{seng1}}]}\\
R_{2} & = &\left(X_{0}^{\lambda+\mu}-X_{2}^{q'+1}Y^{v-w}, 0, X_{1}^{2}-X_{0}X_{2}, 0\right){\rm by \quad [3.1, \mbox{\cite{seng1}}]}\\
R_{3} & = &\left(X_{0}^{\mu}X_{2}^{q-q'}-Y^{v}, 0, 0, X_{1}^{2}-X_{0}X_{2}\right){\rm by \quad [3.1, \mbox{\cite{seng1}}]}\\
R_{4} & = &\left(0,-Y^{v-w},X_{2}^{q-q'},-X_{0}^{\lambda}\right){\rm by \quad [8.1, \mbox{\cite{seng1}}]}\\
R_{5} & = &\left(0,X_{0}^{\mu}X_{2}^{q-q'}-Y^{v},0,X_{2}^{q+1}-X_{0}^{\lambda}Y^{w}\right){\rm by \quad [3.1, \mbox{\cite{seng1}}]}\\
R_{6} & = &\left(0,X_{0}^{\mu}, -Y^{w}, X_{2}^{q'+1} \right) {\rm by \quad [9.2, \mbox{\cite{seng1}}]}
\end{eqnarray*}
We observe that $R_{5}=X_{2}^{q-q'}\cdot R_{6} + Y^{w}\cdot R_{4}$. Therefore, after removing $R_{5}$ from our list we get our 
first syzygy matrix
\begin{eqnarray*}
B & = & \left[^{t}R_{1},^{t}R_{2},^{t}R_{3},^{t}R_{4},^{t}R_{6}\right]\\
{} & = &{\scriptsize
\left[\begin{matrix}
X_{0}^{\lambda}Y^{w}-X_{2}^{q+1} & X_{0}^{\lambda+\mu}-X_{2}^{q^{'}+1}Y^{v-w} & X_{0}^{\mu}X_{2}^{q-q^{'}}-Y^{v} & 0 & 0\\
X_{1}^{2}-X_{0}X_{2} & 0 & 0 &-Y^{v-w} & X_{0}^{\mu}\\
0 & X_{1}^{2}-X_{0}X_{2} & 0 & X_{2}^{q-q^{'}} & -Y^{w}\\
0 & 0 & X_{1}^{2}-X_{0}X_{2} & -X_{0}^{\lambda} & X_{2}^{q^{'}+1}\\
\end{matrix}\right]}
\end{eqnarray*}

\noindent We now the determine the second syzygy, which is the kernel of the map given by the matrix B.
Let $[f_{1},f_{2},f_{3},f_{4},f_{5}]\in R^{9}$ be an element of the second syzygy,that is, 
$$B= {\scriptsize\left[
\begin{matrix}
X_{0}^{\lambda}Y^{w}-X_{2}^{q+1} & X_{0}^{\lambda+\mu}-X_{2}^{q^{'}+1}Y^{v-w} & X_{0}^{\mu}X_{2}^{q-q^{'}}-Y^{v} & 0 & 0\\
X_{1}^{2}-X_{0}X_{2} & 0 & 0 &-Y^{v-w} & X_{0}^{\mu}\\
0 & X_{1}^{2}-X_{0}X_{2} & 0 & X_{2}^{q-q^{'}} & -Y^{w}\\
0 & 0 & X_{1}^{2}-X_{0}X_{2} & -X_{0}^{\lambda} & X_{2}^{q^{'}+1}\\
\end{matrix}\right]}
{\scriptsize\left[\begin{matrix}
f_{1}\\
f_{2}\\
f_{3}\\
f_{4}\\
f_{5}
\end{matrix}
\right] =0}.$$

Multiplying the 2nd row of $B$ with the column vector gives 
$$(X_{1}^{2}-X_{0}X_{2})f_{1}-Y^{v-w}f_{4}+X_{0}^{\mu}f_{5}=0$$ 
and it follows that $Y^{v-w}f_{4}\in \langle (X_{1}^{2}-X_{0}X_{2}), X_{0}^{\mu}\rangle$. 
Therefore, 
$$f_{4}\in \langle (X_{1}^{2}-X_{0}X_{2}),X_{0}^{\mu}\rangle.$$
since $X_{1}^{2}-X_{0}X_{2}, X_{0}^{\mu}, Y^{v-w}$ form a regular sequence in $R$. 
Therefore, we may write 
$$f_{4}=(X_{1}^{2}-X_{0}X_{2})p_{1}+(X_{0}^{\mu})p_{2}$$ 
for polynomials $p_{1}$, $p_{2}$, and it follows that,
$$f_{1}=p_{1}Y^{v-w}+X_{0}^{\mu}p_{3},$$

$$f_{5}=Y^{v-w}p_{2}-(X_{1}^{2}-X_{0}X_{2})p_{3}.$$

Multiplying the 3rd row of $B$ with the column vector gives 
$$(X_{1}^{2}-X_{0}X_{2})f_{2}+X_{2}^{q-q'}f_{4}-Y^{w}f_{5}=0.$$
Plugging in the value of $f_{4}$ and $f_{5}$ in the above equation, we get
$$(X_{1}^{2}-X_{0}X_{2})(f_{2}+X_{2}^{q-q'}p_{1}+Y^{w}p_{3})=(Y^{v}-X_{0}^{\mu}X_{2}^{q-q^{'}})p_{2}.$$
This implies that 
$(X_{1}^{2}-X_{0}X_{2})\mid p_{2}$ and $(Y^{v}-X_{0}^{\mu}X_{2}^{q-q^{'}})\mid(f_{2}+X_{2}^{q-q'}p_{1}+Y^{w}p_{3}).$ 
Taking either quotient as $p_{4}$, we get 
$$f_{2}=-X_{2}^{q-q'}p_{1}-Y^{w}p_{3}+(Y^{v}-X_{0}^{\mu}X_{2}^{q-q^{'}})p_{4}$$
$$p_{2}=(X_{1}^{2}-X_{0}X_{2})p_{4}.$$

Multiplying the 4th row of $B$ with the column vector and plugging in the values of 
$f_{4}$ and $f_{5}$ gives
$$f_{3}=X_{0}^{\lambda}p_{1} + (X_{0}^{\lambda+\mu} - X_{2}^{q'+1}y^{v-w})p_{4} + X_{2}^{q'+1}p_{3}.$$
\noindent Therefore,

$$ {\scriptsize\left[\begin{matrix}
Y^{v-w} & X_{0}^{\mu} & 0\\
-X_{2}^{q-q'} & -Y^{w} & Y^{v}-X_{0}^{\mu}X_{2}^{q-q'}\\
X_{0}^{\lambda} & X_{2}^{q'+1} & X_{0}^{\lambda+\mu}-X_{2}^{q'+1}Y^{v-w}\\
X_{1}^{2}-X_{0}X_{2} & 0 & X_{0}^{\mu}(X_{1}^{2}-X_{0}X_{2})\\
0 & -(X_{1}^2-X_{0}X_{2}) & Y^{v-w}(X_{1}^2-X_{0}X_{2})\\
\end{matrix}\right]}
{\scriptsize\left[
\begin{matrix}
p_{1}\\p_{3}\\p_{4}
\end{matrix}\right]}=
{\scriptsize\left[ 
\begin{matrix}
f_{1}\\f_{2}\\f_{3}\\f_{4}\\f_{5}
\end{matrix}\right]}$$
\noindent Hence the second syzygy matrix is given by 
$${\scriptsize\left[\begin{matrix}
Y^{v-w} & X_{0}^{\mu} & 0\\
-X_{2}^{q-q'} & -Y^{w} & Y^{v}-X_{0}^{\mu}X_{2}^{q-q'}\\
X_{0}^{\lambda} & X_{2}^{q'+1} & X_{0}^{\lambda+\mu}-X_{2}^{q'+1}Y^{v-w}\\
X_{1}^2-X_{0}X_{2} & 0 & X_{0}^{\mu}(X_{1}^2-X_{0}X_{2})\\
0 &-(X_{1}^2-X_{0}X_{2}) & Y^{v-w}(X_{1}^2-X_{0}X_{2})\\
\end{matrix}\right]}$$
\noindent Denoting the columns of the above matrix by $L_{i}$ from the left we see that 
$$L_{3}= X_{0}^{\mu} \cdot L_{1} - Y^{v-w} \cdot L_{2}.$$
Therefore, a reduced form of the second syzygy matrix is 
$${\scriptsize
C=\left[\begin{matrix}
Y^{v-w} & X_{0}^{\mu}\\
-X_{2}^{q-q'} & -Y^{w}\\
X_{0}^{\lambda} & X_{2}^{q'+1}\\
X_{1}^{2}-X_{0}X_{2} & 0\\
0 & -(X_{1}^{2}-X_{0}X_{2})\\
\end{matrix}
\right]}$$
It is easy to see that the third syzygy matrix will be the zero matrix. Therefore,  
$$0\longrightarrow R^{2}\stackrel{\theta{_{C}}}{\longrightarrow} R^{5}\stackrel{\theta{_{B}}}{\longrightarrow} R^{4}\stackrel{\theta{_{A}}}{\longrightarrow} R\longrightarrow k[\Gamma]\longrightarrow0$$ is a free resolution of the binomial ideal in question, where $\theta_{A}$, 
$\theta_{B}$, $\theta_{C}$ are the maps given by the matrices $A$, $B$ and $C$ respectively. This resolution is minimal if $\mu\neq 0$, since the entries of the matrices are from the maximal ideal 
$(X_{0}, X_{1}, X_{2}, Y)$ and the total Betti numbers are $[4,5,2]$. 
\medskip

We now consider the case $\mu = 0$. If we take $P_{2}=E_{51}(-(X_{1}^2-X_{0}X_{2}))E_{54}(Y^{v-w})$ and $P_{3}=E_{42}(-X_{2}^{q^{'}+1})$ and apply Corollary \ref{homo2}, we get the syzygy matrices in their minimal forms as 
$$A=[{\scriptsize\xi_{11},\psi_{0},\theta}]$$
$$B={\scriptsize\left[
\begin{matrix}
-\psi_{0} & -\theta & 0\\
\xi_{11} & 0 & -\theta\\
0 & \xi_{11} & -\psi_{0}
\end{matrix}
\right]}$$
$$ C={\scriptsize\left[
\begin{matrix}
\theta\\
-\psi\\ 
\xi_{11}
\end{matrix}\right]}.$$

\noindent The total Betti numbers are $[3,3,1]$.



\section{\textbf{$W = \emptyset$}}

We consider four subcases:
\begin{enumerate}
\item[(a)] $r=1$, $r'=2$; 
\item[(b)] $r=r'=1$;
\item[(c)] $r=2$.
\end{enumerate}

\subsection{\textbf{Case (a): $r=1$, $r'=2$}.} 

In this case 
\begin{eqnarray*}
\xi_{11} & = & X_{1}^2-X_{0}X_{2}\\
\phi_{0} & = & X_{1}X_{2}^{q} - X_{0}^{\lambda}Y^{w}\\
\phi_{1} & = & X_{2}^{q'+1} - X_{0}^{\lambda -1}X_{1}Y^{w}\\
\theta & = & Y^{v} - X_{0}^{\mu}X_{1}X_{2}^{q-q'-1}\\
\end{eqnarray*}
\noindent Let $A=[\xi_{11}, \phi_{0}, \phi_{1}, \theta ]$ be the $0$-th syzygy matrix. We know that 
$\mathcal{G}$ is a Gr\"{o}bner Basis with respect to the graded reverse lexicographic order from the work done in 
\cite{seng1}. We indicate below the exact results of \cite{seng1}, which have been used together with \ref{schreyer} 
for computing the generators for the first syzygy module . 

\begin{eqnarray*}
R_{1} & = &\left(-X_{2}^{q}, X_{1}, -X_{0}, 0\right) {\rm by \quad [6.1, \mbox{\cite{seng1}}]}\\
R_{2} & = &\left(X_{0}^{\lambda - 1}X_{1}Y^{w} -X_{2}^{q+1}, 0, X_{1}^{2}-X_{0}X_{2}, 0\right){\rm by \quad [3.1, \mbox{\cite{seng1}}]}\\
R_{3} & = &\left(X_{0}^{\mu}X_{1}X_{2}^{q-q'-1}-Y^{v}, 0, 0, X_{1}^{2}-X_{0}X_{2}\right){\rm by \quad [3.1, \mbox{\cite{seng1}}]}\\
R_{4} & = &\left(X_{0}^{\lambda - 1}Y^{w}, -X_{2}, X_{1}, 0\right){\rm by \quad [4.5, \mbox{\cite{seng1}}]}\\
R_{5} & = &\left(0, X_{0}^{\mu}X_{1}X_{2}^{q-q'-1}-Y^{v}, 0, X_{1}X_{2}^{q} - X_{0}^{\lambda}Y^{w}\right){\rm by \quad [3.1, \mbox{\cite{seng1}}]}\\
R_{6} & = &\left(0, 0, X_{0}^{\mu}X_{1}X_{2}^{q-q'-1}-Y^{v}, X_{2}^{q'+1} - X_{0}^{\lambda -1}X_{1}Y^{w}, \right) {\rm by \quad [3.1, \mbox{\cite{seng1}}]}
\end{eqnarray*}
We note that $R_{2} = X_{1}\cdot R_{4} + X_{2}\cdot R_{1}$. Therefore, the syzygy matrices are 
$$A=[\xi_{11},\phi_{0},\phi_{1},\theta],$$
$$B={\scriptsize\left[
\begin{matrix}
-X_{2}^{q} & X_{0}^{\mu}X_{2}^{q-q'-1}-Y^{v}& X_{0}^{\lambda-1}Y^{w}  & 0 & 0\\
X_{1} & 0 & -X_{2}  & X_{0}^{\mu}X_{2}^{q-q'-1}-Y^{v} & 0\\
-X_{0}& 0 & X_{1}  & 0 & X_{0}^{\mu}X_{2}^{q-q'-1}-Y^{v}\\
0 & X_{1}^2-X_{0}X_{2}&0  & X_{1}X_{2}^{q}-X_{0}^{\lambda}Y^{w} & X_{2}^{q+1}-X_{0}^{\lambda-1}X_{1}Y^{w}
\end{matrix}\right]}.$$

\noindent In order to determine the second syzygy we proceed to determine the kernel of the map given by the matrix $B$. 
Suppose that 
$$B\dot {\scriptsize\left[
\begin{array}{c}
 f{1}\\f_{2}\\f_{3}\\f_{4}\\f_{5}
\end{array}
\right]
}=0$$
Multiplying the second row of $B$ with the column vector we get 
\begin{equation}\label{ eq1}
X_{1}f_{1}- X_{2}f_{3} -(Y^{v} - X_{0}^{\mu}X_{1}X_{2}^{q-q'-1}) f_{4}=0 
\end{equation}
\noindent and therefore 
$$(Y^{v} - X_{0}^{\mu}X_{1}X_{2}^{q-q'-1}) f_{4}\in\langle X_{1},X_{2}\rangle .$$  
\noindent We get
$f_{4}\in\langle X_{1},X_{2}\rangle$, since the polynomials 
$X_{1},X_{2}, Y^{v} - X_{0}^{\mu}X_{1}X_{2}^{q-q'-1}$ form a regular sequence in $R$. 
We may write
$$f_{4}= X_{1}p_{1}+X_{2}p_{2},$$
\noindent where $p_{1}$ and $p_{2}$ are polynomials in $R$. Plugging this value in (7.1) we get 
$$X_{1}(f_{1}-Y^{v} + X_{0}^{\mu}X_{1}X_{2}^{q-q'-1})= X_{2}[f_{3}+(Y^{v} - X_{0}^{\mu}X_{1}X_{2}^{q-q'-1})p_{2}],$$ 
\noindent which implies that $X_{1}\mid f_{3}+(Y^{v} - X_{0}^{\mu}X_{1}X_{2}^{q-q'-1})p_{2}$ and 
$X_{2}\mid f_{1}-(Y^{v} - X_{0}^{\mu}X_{1}X_{2}^{q-q'-1})p_{1}$, since $R$ is a UFD. Hence, we obtain 
\begin{eqnarray*}
f_{1} & = & (Y^{v} - X_{0}^{\mu}X_{1}X_{2}^{q-q'-1})p_{1}+ X_{2}p_{3}\\
f_{3} & = & -(Y^{v} - X_{0}^{\mu}X_{1}X_{2}^{q-q'-1})p_{2}+ X_{1}p_{3}
\end{eqnarray*}

Multiplying the third row of $B$ with the column vector gives 
\begin{equation}
-X_{0}f_{1}+ X_{1}f_{3}-(Y^{v} - X_{0}^{\mu}X_{1}X_{2}^{q-q'-1})f_{5}=0
\end{equation}
\smallskip

\noindent Plugging in the values of $f_{1}$ and $f_{3}$ in (7.2) we get,
$$(Y^{v} - X_{0}^{\mu}X_{1}X_{2}^{q-q'-1})(f_{5}+X_{0}p_{1}+X_{1}p_{2})= (X_{1}^2-X_{0}X_{2})p_{3}$$
\noindent and therefore
$$f_{5}=-X_{0}p_{1}-X_{1}p_{2}+(X_{1}^2-X_{0}X_{2})p_{4}$$ and 
$$p_{3}= (Y^{v} - X_{0}^{\mu}X_{1}X_{2}^{q-q'-1}) p_{4}.$$

Multiplying the first row of $B$ with the column vector gives 

\begin{equation}
-X_{2}^{q}f_{1}+ X_{0}^{\lambda-1}Y^{w}f_{3}-(Y^{v} - X_{0}^{\mu}X_{1}X_{2}^{q-q'-1})f_{2}=0
\end{equation}
\smallskip

\noindent Plugging in the values of $f_{1}$, $f_{3}$ and $p_{3}$ in the above obtained equation we get 
$$f_{2}= -X_{2}^{q}p_{1}-X_{0}^{\lambda -1}Y^{w}p_{2}-p_{4}(X_{2}^{q+1} - X_{0}^{\lambda -1}X_{1}Y^{w}).$$
Therefore, 
$${\scriptsize\left[\begin{matrix}
 Y^{v} - X_{0}^{\mu}X_{1}X_{2}^{q-q'-1} & 0 & X_{2}(Y^{v}  - X_{0}^{\mu}X_{1}X_{2}^{q-q'-1})\\
-X_{2}^{q} & -X_{0}^{\lambda-1}Y^{w} & -(X_{2}^{q+1} - X_{0}^{\lambda - 1}X_{1}Y^{w})\\
0 & -(Y^{v} - X_{0}^{\mu}X_{1}X_{2}^{q-q'-1}) & X_{1}( Y^{v} - X_{0}^{\mu}X_{1}X_{2}^{q-q'-1})\\
X_{1} & X_{2} & 0\\
-X_{0} & -X_{1} & X_{1}^2-X_{0}X_{2}
\end{matrix}\right]}
{\scriptsize\left[
\begin{matrix}
p_{1}\\p_{2}\\p_{4}
\end{matrix}\right]}
=
{\scriptsize\left[
\begin{matrix}
f_{1}\\f_{2}\\f_{3}\\f_{4}\\f_{5}
\end{matrix}\right]}$$
Hence the matrix 
$${\scriptsize\left[\begin{matrix}
Y^{v} - X_{0}^{\mu}X_{1}X_{2}^{q-q'-1} & 0 & X_{2}(Y^{v} - X_{0}^{\mu}X_{1}X_{2}^{q-q'-1})\\
-X_{2}^{q} & -X_{0}^{\lambda-1}Y^{w} & -(X_{2}^{q+1} - X_{0}^{\lambda - 1}X_{1}Y^{w})\\
0 & -(Y^{v} - X_{0}^{\mu}X_{1}X_{2}^{q-q'-1}) & X_{1}(Y^{v} - X_{0}^{\mu}X_{1}X_{2}^{q-q'-1})\\
X_{1} & X_{2} & 0\\
-X_{0} & -X_{1} & X_{1}^2-X_{0}X_{2}
\end{matrix}\right]}$$
is the second syzygy matrix. Here, $L_{3}=X_{2}\cdot L_{1} - X_{1}\cdot L_{2}$. Therefore, 
the reduced second syzygy matrix takes the form  
$$C={\scriptsize\left[
\begin{matrix}
Y^{v} - X_{0}^{\mu}X_{1}X_{2}^{q-q'-1} & 0\\
-X_{2}^{q} & -X_{0}^{\lambda-1}Y^{w}\\
0 &-(Y^{v} - X_{0}^{\mu}X_{1}X_{2}^{q-q'-1})\\
X_{1} & X_{2}\\
-X_{0} & -X_{1}
\end{matrix}\right]}.$$

\noindent Hence, the total Betti numbers are $[4,5,2]$. 
\bigskip

\subsection{\textbf{Case (b): $r=r'=1$}.} 

In this case 
\begin{eqnarray*}
\xi_{11} & = & X_{1}^2-X_{0}X_{2}\\
\phi_{0} & = & X_{1}X_{2}^{q} - X_{0}^{\lambda}Y^{w}\\
\phi_{1} & = & X_{2}^{q+1} - X_{0}^{\lambda -1}X_{1}Y^{w}\\
\theta & = & Y^{v} - X_{0}^{\mu}X_{2}^{q-q'}\\
\end{eqnarray*}
\noindent Note that the only difference from the previous case is in the expression of the generator 
$\theta$. It turns out that almost the same computation as above gives us the minimal free resolution 
in this case as well. The syzygy matrices in their minimal form are 
$$A=[\xi_{11},\phi_{0},\phi_{1},\theta],$$
$$B={\scriptsize\left[
\begin{matrix}
-X_{2}^{q} & X_{0}^{\lambda-1}Y^{w} & X_{0}^{\mu}X_{2}^{q-q'}-Y^{v} & 0 & 0\\
X_{1} & -X_{2} & 0 & X_{0}^{\mu}X_{2}^{q-q'}-Y^{v} & 0\\
-X_{0} & X_{1} & 0 & 0 & X_{0}^{\mu}X_{2}^{q-q'}-Y^{v}\\
0 & 0 & X_{1}^2-X_{0}X_{2} & X_{1}X_{2}^{q}-X_{0}^{\lambda}Y^{w} & X_{2}^{q+1}-X_{0}^{\lambda-1}X_{1}Y^{w}
\end{matrix}\right]} \quad {\rm and}$$
$$C={\scriptsize\left[
\begin{matrix}
Y^{v} - X_{0}^{\mu}X_{2}^{q-q'} & 0\\
0 &-(Y^{v} - X_{0}^{\mu}X_{2}^{q-q'})\\
-X_{2}^{q} & -X_{0}^{\lambda-1}Y^{w}\\
X_{1} & X_{2}\\
-X_{0} & -X_{1}
\end{matrix}\right]}.$$
\noindent Hence, the total Betti numbers are $[4,5,2]$. 
\bigskip

\subsection{\textbf{$W = \emptyset$; $r=2$}}

In this case the prime ideal $\mathfrak{p}$ is minimally generated by the set $\{\xi_{11},\phi_{0},\theta\}$. Therefore, 
$\mathfrak{p}$ is a complete intersection and therefore it is minimally resolved by the Koszul complex. The 
syzygy matrices in their minimal forms are  
$$A = [\xi_{11},\phi_{0},\theta],$$
$$B={\scriptsize \left[
\begin{matrix}
-\phi_{0} & -\theta & 0\\
\xi_{11} & 0 & -\theta\\
0 & \xi_{11} & \phi_{0}\\
\end{matrix}
\right]}$$
$$C={\scriptsize\left[
\begin{matrix}
\theta\\
-\phi_{0}\\
\xi_{11}
\end{matrix}
\right]}$$
\noindent Hence, the total Betti numbers are $[3,3,1]$. 

\section{Graded Betti numbers \& Hilbert Functions}
Let $[a,b,c]$ denote the total Betti numbers of $k[\Gamma]$. A graded free resolution of $k[\Gamma]$ as a module over $R$ is given by 
$$0\longrightarrow\oplus_{i=1}^{c}R(-q_{i})\longrightarrow \oplus_{i=1}^{b}R(-p_{i})\longrightarrow \oplus_{i=1}^{a}R(-s_{i})\longrightarrow R\longrightarrow k[\Gamma] \longrightarrow 0.$$
We can easily read the values of $s_{i}$,$p_{i}$ and $q_{i}$ in various cases from the computations that we carried out in the 
previous sections. This information can be used for writing down the Hilbert function of the $R$-module $k[\Gamma]$, if we use 
Theorem 16.2 in \cite{peeva}.
\bigskip

\noindent{\textbf{Case (i): $W\neq\emptyset$; $r=1$, $r' = 2$, $\mu= 0$, $q-q' \neq 1$.}}
\medskip

\begin{tabular}{|c|c|c|}
\hline
$s_{1}=2m_{1}$ & $p_{1}=qm_{2}+2m_{1}$ & $q_{1}=(\lambda-1)m_{0}+vn+2m_{1}$\\
$s_{2}=(q+1)m_{2}$ & $p_{2}=\lambda m_{0}+2m_{1}$ & $q_{2}=m_{1}+\lambda m_{0}+vn$\\
$s_{3}=\lambda m_{0}$ & $p_{3}=vn+2m_{1}$ & $q_{3}=3m_{1}+qm_{2}$\\
$s_{4}=vn$ & $ p_{4}=wn+(\lambda-1)m_{0}+2m_{1}$ & {}\\
{} & $p_{5}=\lambda m_{0}+vn$ & {}\\
{} & $p_{6}=(\lambda-1)m_{0} +(q-q^{'}-1)m_{2}+2m_{1}$ & {}\\
\hline
4 & 6 & 3\\
\hline
\end{tabular}
\bigskip

\noindent{\textbf{Case (ii): $W\neq\emptyset$; $r=1$, $r' = 2$, $\mu\neq 0$, $q-q' = 1$, $\lambda \neq 1$.}}
\medskip

\begin{tabular}{|c|c|c|}
\hline
$s_{1}=2m_{1}$ & $p_{1}=qm_{2}+2m_{1}$ & $q_{1}=(\lambda -1)m_{0}+vn+ 2m_{1}$\\
$s_{2}=m_{1}+qm_{2}$ & $p_{2}=vn+2m_{1}$ & $q_{2}=3m_{1}+\mu m_{0}+qm_{2}$\\
$s_{3}=(q+1)m_{2}$ & $p_{3}=wn+(\lambda-1)m_{0}+2m_{1}$ & {}\\
$s_{4}=(\lambda+\mu) m_{0}$ & $ p_{4}=\lambda m_{0}+vn$ & {}\\
$s_{5}=vn$ & $p_{5}=(v-w)n+(q+1)m_{2}$ & {}\\
{} & $p_{6}=\mu m_{0}+ m_{1}+qm_{2}$ & {}\\
\hline
5 & 6 & 2\\
\hline
\end{tabular}
\bigskip

\noindent{\textbf{Case (iii): $W\neq\emptyset$; $r=1$, $r' = 2$, $\mu\neq 0$, $q-q' \neq 1$, $\lambda = 1$.}}
\medskip

\begin{tabular}{|c|c|c|}
\hline
$s_{1}=2m_{1}$ & $p_{1}=qm_{2}+2m_{1}$ & $q_{1}=(v-w)n+qm_{2}+2m_{1}$\\
$s_{2}=m_{1}+qm_{2}$ & $p_{2}=(\mu+1) m_{0}+2m_{1}$ & $q_{2}=nw+ (\mu+1) m_{0}+2m_{1}$\\
$s_{3}=(q+1)m_{2}$ & $p_{3}=m_{2}+m_{1}+qm_{2}$ & {}\\
$s_{4}=(\mu+1) m_{0}$ & $ p_{4}= m_{0}+vn$ & {}\\
$s_{5}=vn$ & $p_{5}=(q-q^{'})m_{2}+(\mu+1) m_{0}$ & {}\\
{} & $p_{6}=\mu m_{0}+ m_{1}+qm_{2}$ & {}\\
\hline
5 & 6 & 2\\
\hline
\end{tabular}
\bigskip

\noindent{\textbf{Case (iv): $W\neq\emptyset$; $r=1$, $r' = 2$, $\mu\neq 0$, $q-q' = 1$, $\lambda = 1$.}}
\medskip

\begin{tabular}{|c|c|c|}
\hline
$s_{1}=2m_{1}$ & $p_{1}=qm_{2}+2m_{1}$ & $q_{1}=vn+qm_{2}+2m_{1}$\\
$s_{2}=m_{1}+qm_{2}$ & $p_{2}=wn+2m_{1}$ & {}\\
$s_{3}=(q+1)m_{2}$ & $p_{3}= m_{0}+vn$ & {}\\
$s_{4}=(\mu+1) m_{0}$ & $p_{4}=\mu m_{0} +2m_{1}$ & {}\\
$s_{5}=vn$ & $p_{5}=(q^{'}+1)m_{2}+vn $ & {}\\
\hline
5 & 5 & 1\\
\hline
\end{tabular}
\bigskip

\noindent{\textbf{Case (v): $W\neq\emptyset$; $r=1$, $r' = 2$, $\mu\neq 0$, $q-q' \neq 1$, $\lambda \neq 1$.}}
\medskip

\begin{tabular}{|c|c|c|}
\hline
$s_{1}=2m_{1}$ & $p_{1}=qm_{2}+2m_{1}$ & $q_{1}=(\lambda-1)m_{0}+vn+2m_{1}$\\
$s_{2}=m_{1}+qm_{2}$ & $p_{2}=(\lambda+\mu) m_{0}+2m_{1}$ & $q_{2}=(v-w)n+qm_{2}+2m_{1}$\\
$s_{3}=(q+1)m_{2}$ & $p_{3}=vn+2m_{1}$ & $q_{3}=3m_{1}+\mu m_{0}+qm_{2}$\\
$s_{4}=(\lambda+\mu) m_{0}$ & $p_{4}=wn+(\lambda-1) m_{0}+2m_{1}$ & {}\\
$s_{5}=vn$ & $p_{5}=(v-w)n+m_{1}+qm_{2}$ & {}\\
{} & $p_{6}=(\lambda+\mu-1)m_{0}+(q-q^{'}-1) m_{2}+2m_{1}$ & {}\\
{} & $p_{7}=(q^{'}+1)m_{2}+nv$ & {}\\
\hline
5 & 7 & 3\\
\hline
\end{tabular}
\bigskip

\noindent{\textbf{Case (vi): $W\neq\emptyset$; $r=r'=1$, $\mu = 0$.}}
\medskip

\begin{tabular}{|c|c|c|}
\hline
$s_{1}=2m_{1}$ & $p_{1}=m_{1}+\lambda m_{0}$ & $q_{1}=m_{2}+ nv+ \lambda m_{0}$\\
$s_{2}=m_{0}\lambda$ & $p_{2}=nv+ 2m_{1}$ & $q_{2}=m_{1}+nv+ \lambda m_{0}$ \\
$s_{3}=(\lambda -1)m_{0}+m_{1}$ & $ p_{3}= nv+ \lambda m_{0}$ & {}\\
$s_{4}=vn$ & $p_{4}=(\lambda-1)m_{0}+m_{1}+nv$ & {}\\
{} & $p_{5}=\lambda m_{0}+m_{2}$ & {}\\
\hline
4 & 5 & 2\\
\hline
\end{tabular}
\bigskip

\noindent{\textbf{Case (vii): $W\neq\emptyset$; $r=r'=1$, $\mu \neq 0$, $\lambda = 1$.}}
\medskip

\begin{tabular}{|c|c|c|}
\hline
$s_{1}=2m_{1}$ & $p_{1}=qm_{2}+2m_{1}$ & $q_{1}=\mu m_{0}+ m_{1}+(q+1)m_{2}$\\
$s_{2}=m_{1}+qm_{2}$ & $p_{2}=m_{1}+m_{0}(\mu +1)$ & $q_{2}=m_{1}+  m_{0}+vn$ \\
$s_{3}=(q+1)m_{2}$ & $p_{3}=m_{1}+(q+1)m_{2}$ & $q_{3}=\mu m_{0}+ qm_{2}+2m_{1}$\\
$s_{4}=(\mu+1) m_{0}$  & $p_{4}= m_{0}+vn$ & {}\\
$s_{5}=\mu m_{0}+m_{1}$ & $p_{5}=(q+1)m_{2}+n(v-w)$ & {}\\
$s_{6}=vn$ & $p_{6}=(\mu+1) m_{0}+m_{2}$ & {}\\
{} & $p_{7}=m_{0}\mu+ m_{1}+qm_{2}$ & {}\\
{} & $p_{8}=(q+1)m_{2}+m_{0}\mu$ & {}\\
\hline
6 & 8 & 3\\
\hline
\end{tabular}
\bigskip

\noindent{\textbf{Case (viii): $W\neq\emptyset$; $r=r'=1$, $\mu \neq 0$, $\lambda \neq 1$, $q' = 0$.}}
\medskip

\begin{tabular}{|c|c|c|}
\hline
$s_{1}=2m_{1}$ & $p_{1}=qm_{2}+ 2m_{1} $ & $q_{1}=m_{2}+ (v-w)n+m_{1}+qm_{2}$ \\
$s_{2}=m_{1}+qm_{2}$ & $p_{2}=m_{1}+(\lambda+\mu) m_{0} $ & $q_{2}=\mu m_{0}+ m_{1}+(q+1)m_{2}$ \\
$s_{3}=(q+1)m_{2}$ & $p_{3}=m_{1}+(q+1)m_{2}$ & $q_{3}=(v-w)n+ qm_{2}+2m_{1}$\\
$s_{4}=(\mu+1)m_{0}$ & $p_{4}=(v-w)n+m_{1}+qm_{2}$ & {}\\
$s_{5}=(\mu)m_{0}+m_{1}$ & $p_{5}=(v-w)n+(q+1)m_{2}$ & {}\\
$s_{6}=vn$  & $p_{6}=m_{2}+ (\mu+1) m_{0}$ & {}\\
{} & $p_{7}=\mu m_{0}+ m_{1}+qm_{2}$ & {}\\
{} & $p_{8}=\mu m_{0}+ (q+1)m_{2}$ & {}\\
\hline
6 & 8 & 3\\
\hline
\end{tabular}
\bigskip

\noindent{\textbf{Case (ix): $W\neq\emptyset$; $r=r'=1$, $\mu \neq 0$, $\lambda \neq 1$, $q' \neq 0$.}}
\medskip

\begin{tabular}{|c|c|c|}
\hline
$s_{1}=2m_{1}$ & $p_{1}=qm_{2}+ 2m_{1}$ & $q_{1}= (\lambda -1)m_{0}+ 2m_{1}+ vn$ \\
$s_{2}=m_{1}+qm_{2}$ & $p_{2}=m_{1}+ (\lambda+\mu) m_{0}$ & $q_{2}=\mu m_{0}+ m_{1}+ (q+1)m_{2}$ \\
$s_{3}=(q+1)m_{2}$ & $p_{3}= 2m_{1}+ vn$ & $q_{3}=m_{1}+\lambda m_{0}+ vn $\\
$s_{4}=(\lambda+\mu) m_{0}$ & $p_{4}=m_{1}+ (q+1)m_{2} $ & $q_{4}=\mu m_{0}+ qm_{2}+ 2m_{1}$\\
$s_{5}=(\lambda+\mu -1)m_{0}+m_{1}$ & $p_{5}=\lambda m_{0}+ vn$ & {}\\
$s_{6}=vn$ & $p_{6}=(v-w)n+ (q+1)m_{2}$ & {}\\
{} & $p_{7}=m_{2}+ (\lambda+\mu) m_{0}$ & {}\\
{} & $p_{8}=\mu m_{0}+ m_{1}+qm_{2}$ & {}\\
{} & $p_{9}=\mu m_{0}+(q+1)m_{2}$ & {}\\
\hline
6 & 9 & 4\\
\hline
\end{tabular}
\bigskip

\noindent{\textbf{Case (x): $W\neq\emptyset$; $r=2$, $r'=1$, $\mu = 0$, $q'\neq 0$.}}
\medskip

\begin{tabular}{|c|c|c|}
\hline
$s_{1}=2m_{1}$ & $p_{1}=q^{'}m_{2}+(v-w)n+ 2m_{1}$ & $q_{1}=wn+ q^{'}m_{2}+(v-w)n+ 2m_{1}$ \\
$s_{2}=(q+1)m_{2}$ & $p_{2}=nv+2m_{1}$ & $q_{2}=\lambda m_{0}+ nv+2m_{1}$ \\
$s_{3}=(\lambda+1) m_{0}$ & $ p_{3}=\lambda m_{0}+nv$ & {}\\
$s_{4}=\lambda m_{0}+m_{1}$ & $p_{4}=\lambda m_{0}+2m_{1}$ & {}\\
$s_{5}=nv$ & $p_{5}=m_{2}q+2m_{1}$ & {}\\
{} & $p_{6}=m_{1}+m_{0}+(q+1)m_{2}$ & {}\\
\hline
5 & 6 & 2\\
\hline
\end{tabular}
\bigskip

\noindent{\textbf{Case (xi): $W\neq\emptyset$; $r=2$, $r'=1$, $\mu = 0$, $q' = 0$.}}
\medskip

\begin{tabular}{|c|c|c|}
\hline
$s_{1}=2m_{1}$ & $p_{1}= (\lambda+1) m_{0}+2m_{1}$ & $q_{1}=\lambda m_{0}+nw+(\lambda +1)m_{0} +2m_{1}$ \\
$s_{2}=(q+1)m_{2}$ & $p_{2}=m_{0}\lambda+nv$ & {}\\
$s_{3}=(\lambda+1) m_{0}+m_{1}$  & $p_{3}=\lambda m_{0}+2m_{1}$ & {}\\
$s_{4}=\lambda m_{0}+m_{1}$ & $p_{4}=m_{2}q+2m_{1}$ & {}\\
$s_{5}=nv$ & $p_{5}=m_{0}+m_{1}+(q+1)m_{2}$ & {}\\
\hline
5 & 5 & 1\\
\hline
\end{tabular}
\bigskip

\noindent{\textbf{Case (xii): $W\neq\emptyset$; $r=2$, $r'=1$, $\mu \neq 0$, $q'\neq q$, $q' = 0$.}}
\medskip

\begin{tabular}{|c|c|c|}
\hline
$s_{1}=2m_{1}$ & $p_{1}=2m_{1}+(q+1)m_{2}$ & $q_{1}=m_{0}\mu + 2m_{1}+(q+1)m_{2}$\\
$s_{2}=(q+1)m_{2}$ & $p_{2}=m_{1}+ (\lambda+\mu+1) m_{0}+m_{1}$ & $q_{2}=2m_{1}+ \lambda m_{0}+nv$ \\
$s_{3}=(\lambda+\mu+1) m_{0}+m_{1}$ & $ p_{3}=\lambda m_{0}+nv$ & {}\\
$s_{4}=(\lambda+\mu)m_{0}+m_{1}$ & $p_{4}=(\lambda+\mu ) m_{0}+2m_{1}$ & {}\\
$s_{5}=nv$ & $p_{5}=m_{0}\mu+m_{2}q+2m_{1}$ & {}\\
{} & $p_{6}=(q+1)m_{2}+m_{0}+m_{1}$ & {}\\
\hline
5 & 6 & 2\\
\hline
\end{tabular}
\bigskip

\noindent{\textbf{Case (xiii): $W\neq\emptyset$; $r=2$, $r'=1$, $\mu \neq 0$, $q'= q$, $q'\neq 0$.}}
\medskip

\begin{tabular}{|c|c|c|}
\hline
$s_{1}=2m_{1}$ & $p_{1}=(q+1)m_{2}+2m_{1}$&$q_{1}=m_{0}+nv+(q+1)m_{2}$ \\
$s_{2}=(q+1)m_{2}$ & $p_{2}=(\lambda+\mu+1) m_{0}+2m_{1}$&$q_{2}=2m_{1}+\mu m_{0}+(q+1)m_{2}$ \\
$s_{3}=(\lambda+\mu+1)m_{0}+m_{1}$ & $p_{3}=2m_{1}+nv$ & $q_{3}=(v-w)n+2m_{1}+(q+1)m_{2}$\\
$s_{4}=nv$ & $p_{4}=m_{2}+ (\lambda+\mu+1) m_{0}+m_{1}$ & {}\\
{} & $p_{5}=m_{0}\mu+m_{2}q+2m_{1}$ & {}\\
{} & $p_{6}=(q+1)m_{2}+nv$ & {}\\
\hline
4 & 6 & 3\\
\hline
\end{tabular}
\bigskip

\noindent{\textbf{Case (xiv): $W\neq\emptyset$; $r=2$, $r'=1$, $\mu \neq 0$, $q'\neq q$, $q' \neq 0 $.}}
\medskip

\begin{tabular}{|c|c|c|}
\hline
$s_{1}=2m_{1}$ & $p_{1}=(q+1)m_{2}+2m_{1}$ & $q_{1}=m_{0}+(q^{'}+1)m_{2}+nv$ \\
$s_{2}=(q+1)m_{2}$ & $p_{2}=q^{'} m_{2}+(v-w)n+2m_{1}$ & $q_{2}=2m_{1}+\mu m_{0}+(q+1)m_{2}$ \\
$s_{3}=(\lambda+\mu+1) m_{0}+m_{1}$ & $ p_{3}=vn+2m_{1}$ & $q_{3}=(v-w)n+(q+1)m_{2}+2m_{1}$\\
$s_{4}=(\lambda+\mu)m_{0}+m_{1}$ & $p_{4}=\lambda m_{0}+nv$ & {}\\
$s_{5}=nv$ & $p_{5}=(\lambda+\mu)m_{0}+2m_{1}$ & {}\\
{} & $p_{6}=m_{0}\mu+m_{2}q+2m_{1}$ & {}\\
{} & $p_{7}=(q^{'}+1)m_{2}+nv$ & {}\\
\hline
5 & 7 & 3\\
\hline
\end{tabular}
\bigskip

\noindent{\textbf{Case (xv): $W\neq\emptyset$; $r=r'=2$, $\mu=0$.}}
\medskip

\begin{tabular}{|c|c|c|}
\hline
$s_{1}=2m_{1}$ & $p_{1}=2m_{1}+ (q+1)m_{2}+ nv$ & $q_{1}=(q+1)m_{2}+2m_{1}+m_{0}\lambda$ \\
$s_{2}=(q+1)m_{2}$ & $p_{2}=2m_{1}+nv$ & {}\\
$s_{3}=nv$ & $ p_{3}=(q+1)m_{2}+nv$ & {}\\
\hline
3&3&1\\
\hline
\end{tabular}
\bigskip

\noindent{\textbf{Case (xvi): $W\neq\emptyset$; $r=r'=2$, $\mu\neq0$.}}
\medskip

\begin{tabular}{|c|c|c|}
\hline
$s_{1}=2m_{1}$ & $p_{1}=2m_{1}+m_{2}(q+1)$ & $q_{1}=n(v-w)+2m_{1}+m_{2}(q+1)$ \\
$s_{2}=(q+1)m_{2}$ & $p_{2}=2m_{1}+m_{0}(\lambda+\mu)$ & $q_{2}=\mu m_{0}+2m_{1}+m_{2}(q+1)$ \\
$s_{3}=\nu m_{0}$ & $ p_{3}=2m_{1}+nv$ & $q_{3}=nv+2m_{1}+m_{0}(\lambda+\mu)$\\
$s_{4}=nv$ & $p_{4}=n(v-w)+m_{2}(q+1)$ & {}\\
{} & $p_{5}=m_{0}\mu+m_{2}(q+1)$ & {}\\
\hline
4 & 5 & 2\\
\hline
\end{tabular}
\bigskip

\noindent{\textbf{Case (xvii): $W=\emptyset$; $r=1$ , $r'=2$.}}
\medskip

\begin{tabular}{|c|c|c|}
\hline
$s_{1}=2m_{1}$ & $p_{1}=2m_{1}+m_{2}q$ & $q_{1}=nv+2m_{1}+m_{2}q$ \\
$s_{2}=m_{1}+qm_{2}$ & $p_{2}=nv+2m_{1}$ & $q_{2}=nv+(\lambda-1)m_{0}+wn+2m_{1}$\\ 
$s_{3}=(q^{'}+1)m_{2}$ & $p_{3}=2m_{1}+nw+(\lambda-1)m_{0}$ & {}\\
$s_{4}=nv$ & $p_{4}=m_{1}+m_{2}q+nv$ & {}\\
{} & $p_{5}=nv+(q^{'}+1)m_{2}$ & {}\\
\hline
4&5&2\\
\hline
\end{tabular}
\bigskip

\noindent{\textbf{Case (xviii): $W=\emptyset$; $r=r'=1$.}}
\medskip

\begin{tabular}{|c|c|c|}
\hline
$s_{1}=2m_{1}$ & $p_{1}=2m_{1}+m_{2}q$ & $q_{1}=nv+2m_{1}+m_{2}q$ \\
$s_{2}=m_{1}+qm_{2}$ & $p_{2}=2m_{1}+nw+(\lambda-1)m_{0}$ & $q_{2}= nv+2m_{1}+nw+(\lambda-1)m_{0}$\\
$s_{3}=(q+1)m_{2}$ & $p_{3}=nv+2m_{1}$ & {}\\
$s_{4}=nv$ & $ p_{4}=m_{2}(q+1)+nv$ & {}\\
{} & $p_{5}=m_{1}+m_{2}q+nv$ & {}\\
\hline
4&5&2\\
\hline
\end{tabular}
\bigskip

\noindent{\textbf{Case (xix): $W=\emptyset$; $r=2$.}}
\medskip

\begin{tabular}{|c|c|c|}
\hline
$s_{1}=2m_{1}$ & $p_{1}=2m_{1}+m_{2}(q+1)$ & $q_{1}=nv+2m_{1}+m_{2}(q+1)$ \\
$s_{2}=(q+1)m_{2}$ & $p_{2}=2m_{1}+nv$ & {}\\ 
$s_{3}=nv$&$ p_{3}=m_{2}(q+1)+nv$&\\
\hline
3&3&1\\
\hline
\end{tabular}

\bibliographystyle{amsalpha}

\end{document}